\setlist{noitemsep}
\pgfplotsset{compat=1.14}
\pgfplotsset{
    tick align=outside,
    x grid style={white},
    xmajorgrids,
    y grid style={white},
    ymajorgrids,
    axis line style={white},
    axis background/.style={fill=white!92!black},
    legend style={draw=white, fill=white},
    legend cell align={left}
}
\newtheorem{thm}{Theorem}
\newtheorem{lemma}[thm]{Lemma}
\newtheorem{definition}[thm]{Definition}
\newtheorem{hypothesis}{Hypothesis}
\theoremstyle{remark}
\newtheorem{remark}{Remark}
\newcommand{\hypref}[1]{\textbf{(H\ref{#1})}}
\newcommand{\tnorm}[1]{{\left\vert\kern-0.25ex\left\vert\kern-0.25ex\left\vert #1
    \right\vert\kern-0.25ex\right\vert\kern-0.25ex\right\vert}}
\newcommand{\R}{\mathbb{R}}
\newcommand{\N}{\mathbb{N}}
\newcommand{\fsL}{\textnormal{L}} 
\newcommand{\fsH}{\textnormal{H}} 
\newcommand{\fsC}{\mathscr{C}} 
\newcommand{\fsHyp}{\textnormal{H}_{\textnormal{hyp}}} 
\newcommand{\ee}{\mathrm{e}} 
\newcommand{\dd}{\mathrm{d}} 
\newcommand{\id}{\mathrm{id}} 
\newcommand{\init}{\mathrm{in}} 
\newcommand{\ext}{\mathrm{ext}} 
\newcommand{\conv}{*} 
\newcommand{\ind}{\vmathbb{1}} 
\DeclareMathOperator{\supp}{supp}
\DeclareMathOperator{\divergence}{div}
\newcommand{\oP}{P} 
\newcommand{\oL}{L} 
\newcommand{\cyl}{C^{X_0}} 
\title{Regularity for rough hypoelliptic equations}
\author{Helge Dietert \and Jonas Hirsch}
\begin{document}
\maketitle
\begin{abstract}
  We present a general approach to obtain a weak Harnack inequality for
  rough hypoellipitic equations, e.g.\ kinetic equations. The proof is
  constructive and does not study the commutator structure but rather
  compares the rough solution with a smooth problem for which the
  estimates are assumed.
\end{abstract}

\section{Introduction}

\subsection{Motivation}

One motivation is kinetic theory describing a density
\(f=f(t,x,v)\) at a time~\(t\) over the phase space consisting of a
spatial position~\(x\) and a velocity~\(v\). For a collisional
evolution, like the Boltzmann or Landau equation, the evolution is
then given by
\begin{equation}
  \label{eq:boltzmann}
  \partial_t f + v \cdot \nabla_x f = Q(f),
\end{equation}
where \(Q\) is a collision operator. In the most basic form, \(Q\) is
a diffusion operator in the velocity variable \(v\) so that we arrive
at
\begin{equation}
  \label{eq:kinetic-smooth}
  \partial_t f + v \cdot \nabla_x f = \nabla_v \cdot \nabla_v f.
\end{equation}

The evolution~\eqref{eq:kinetic-smooth} is not parabolic because there
is no diffusion in the spatial position \(x\). The fundamental
solution, computed explicitly by
\textcite{kolmogoroff-1934-zufaellige-bewegungen} in 1934, shows that,
nevertheless, a solution is smooth in all directions.

In a general setting, \textcite{hoermander-1967-hypoelliptic}
understood in 1967 this smoothing property. For smooth vector fields
\(X_0,X_1,\dots,X_m\), he looked at solutions to the equation
\begin{equation}
  \label{eq:hypoelliptic-setup}
  X_0 u + \sum_{i=1}^{m} (-X_i)^* X_i u = S
\end{equation}
and called the equation \emph{hypoelliptic} if the smoothness of
\(S\) implies that \(u\) is smooth. He then shows that
\eqref{eq:hypoelliptic-setup} is hypoelliptic if \(X_0,X_1,\dots,X_m\)
and their commutators span the full space at every point.

A different development was the question of regularity for elliptic
equations with \emph{rough coefficients}. Such a regularity was proved
by \textcite{giorgi-1957-sulla} in 1957 and
\textcite{nash-1958-continuity} in 1958, also covering the parabolic
case.

The combination of these ideas saw a lot of recent interest
\cite{wang-zhang-2009-ultraparabolic,wang-zhang-2011-ultraparabolic,golse-imbert-mouhot-vasseur-2019-harnack-inequality,guerand-imbert-2021-preprint-log-transform,guerand-mouhot-2021-preprint-quantitative-de-giorgi,anceschi-rebucci-2021-preprint-weak-regularity-kolmogorov,anceschi-polidoro-ragusa-2019-mosers-kolmogorov,zhu-2020-preprint-velocity-averaging,garain-nyström-2022-preprint-kolmogorov-fokker-planck}
as it is a path for regularity results for nonlinear kinetic
equations, where the solution satisfies schematically
\begin{equation}
  \label{eq:kinetic-rough}
  \partial_t f + v \cdot \nabla_x f = \nabla_v \cdot(a \nabla_v f)
\end{equation}
and \(a\) depends again on \(f\). On this level, we do not know any
regularity on \(f\) so that we just assume that \(a\) is bounded from
below and above, i.e.\ \(a\) is a rough coefficient. If we can still
obtain a regularity result, we can use it to bootstrap regularity as
explained in
\cite{imbert-mouhot-2021-schauder-toy-model,imbert-silvestre-2020-harnack-boltzmann,anceschi-zhu-2021-preprint-fokker-planck}.

A related direction is the study of sub-Riemannian geometry which asks
similar questions without a drift \(X_0\). The lack of the drift seems
to simplify several problems and quite general results are
available. Along this direction we refer to
\cite{capogna-citti-rea-2013-subelliptic-analogue-aronson-serrin-harnack-inequality}
as a starting point.

\subsection{General setting and main results}

Our observation is that the smoothing property of an hypoelliptic
operator implies in an robust way the key steps of regularity for a
rough version: the supremum bound and the weak Harnack inequality.

In this general setting, we study functions \(u=u(t,x)\) where
\(t\in\R\) is a special time-variable and \(x\in\R^n\) is a general
space. Then suppose smooth vector-fields \(\tilde X_0,X_1,\dots,X_m\)
acting only along the spatial directions \(x\), i.e.\
\(X_i = \sum_{j=1}^{n} X^j_i(t,x) \partial_{x_j}\) with smooth
coefficients \((X^j_i)_{i,j}\). Using the standard \(\fsL^2(\R^n)\)
define the adjoints \(X_i^*\) of \(X_i\) and let
\begin{equation*}
  X_i^t = - X_i^* \qquad \text{for } i=1,\dots,m.
\end{equation*}
We consider the smooth operator
\begin{equation}
  \label{eq:def-p0}
  X_0 - \oL_0
  \quad\text{where}\quad
  X_0 = \partial_t + \tilde X_0
  \text{ and }
  \oL_0 := \sum_{i=1}^{m} X_i^t X_i.
\end{equation}
The natural functional spaces for solutions has already been
identified in \textcite{hoermander-1967-hypoelliptic}, see also
\cite{albritton-armstrong-mourrat-novack-2019-preprint-variational-fokker-planck,anceschi-rebucci-2021-preprint-weak-regularity-kolmogorov}.
We introduce the space \(\fsHyp^1\) defined by the norm
\begin{equation}
  \label{eq:definition-hyp}
  \| u \|_{\fsHyp^1} = \| u \|_{\fsL^2} + \| \vec{X} u \|_{\fsL^2}
\end{equation}
where $\vec{X}u=(X_1u, \dotsc, X_mu)$ and we denote by \(\fsHyp^{-1}\)
the dual space of \(\fsHyp^{1}\). Throughout we will consider
classical weak solutions \(u\) with \(u \in \fsHyp^1\) and
\(X_0 u \in \fsHyp^{-1}\).

For a point \(x_0 \in \R^n\), let \(B_r(x_0) \subset \R^n\) be the
standard open Euclidean ball of radius \(r\). For a parabolic cylinder, we
include the drift \(X_0\).
\begin{definition}[Parabolic cylinder \(\cyl\)]
  \label{def:parabolic-cyl}
  For a point \((t_0,x_0) \in \R^{1+n}\) solve the transport equation
  \begin{equation}
    \left\{
      \begin{aligned}
        & X_0 \eta = 0
        & &\text{in } \R^{1+n}, \\
        &w(t_0,\cdot) = \ind_{B_r(x_0)}
        & &\text{on } \{t_0 \} \times \R^n.
      \end{aligned}
    \right.
  \end{equation}
  The parabolic cylinder \(\cyl_{s,r}(t_0,x_0) \subset \R^{1+n}\) with
  time size \(s\) and space size \(r\) is
  \begin{equation}
    \label{eq:def-parabolic-cyl}
    \cyl_{s,r}(t_0,x_0) = \supp \eta \cap (t_0-s,t_0].
  \end{equation}
\end{definition}
\begin{figure}[h]
  \centering
  \begin{tikzpicture}
    \draw[thick,fill=green!20] (1,1) -- (3,1) -- (2.5,-1) -- (0.5,-1)
    -- cycle;
    \draw[-{Stealth[scale=2]}] (2.75,0) -> (2.875,0.5)
    node[anchor=west] {\(\tilde X_0\)};
    \draw[->] (0,-1.5) -- (0,1.5) node[anchor=east] {\(t\)};
    \draw[->] (-0.1,0) -- (5,0) node[anchor=north west]
    {\(x\in\R^n\)};
    \draw[fill=black] (2,1) circle (0.05) node[anchor=south]
    {\((t_0,x_0)\)};
    \draw[thick, dotted] (1,1) -- (-0.1,1) node[anchor=east] {\(t_0\)};
    \draw[thick, dotted] (0.5,-1) -- (-0.1,-1) node[anchor=east] {\(t_0-s\)};
  \end{tikzpicture}
  \caption{Illustration of \cref{def:parabolic-cyl} of a parabolic
    cylinder \(\cyl_{s,r}(t_0,x_0)\).}
  \label{fig:parabolic-cyl}
\end{figure}
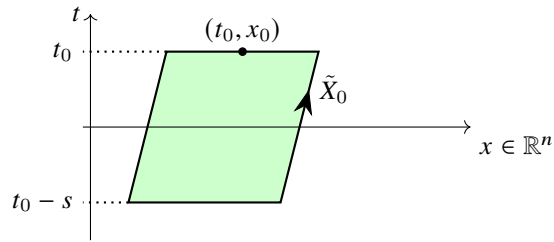
\begin{remark}
  If \(\tilde X_0\) is independent of time, the transport equation is
  solved by the semigroup \(\ee^{t\tilde X_0}\) and we find
  \begin{equation*}
    \cyl_{s,r}(t_0,x_0) =
    \{
    (t,y) \in (t_0-s,t_0] :
    y \in \ee^{(t_0-t)\tilde X_0} B_r(x_0)
    \}.
  \end{equation*}
\end{remark}

We then capture the hypoelliptic behaviour of \(X_0 - \oL_0\) by
supposing estimates gaining local integrability.
\begin{hypothesis}\label{h:smooth-ivp}
  Suppose a parabolic domain
  \(\Omega_t = (t_1,t_2] \times B \subset \R^{1+n}\) for times
  \(-\infty<t_1<t_2<\infty\) and a bounded ball \(B \subset \R^n\) and
  suppose an extended domain
  \(\Omega^\ext_t = (t_1,t_2] \times B^\ext\) for a ball
  \(B \subset B^\ext\) (with the possibility \(B^\ext = \R^n\)) and an
  extended (possible degenerate) elliptic operator \(\oL^\ext_0\) in
  divergence form with vanishing lower order terms such that
  \(\oL^\ext_0 = \oL_0\) on \(\Omega_t\). Then suppose integrabilities
  \(p_1>2\) and \(\gamma_0\le\gamma_1\le 2\) and a constant \(C_0>0\)
  such that for any time \(t_\init \in (t_1,t_2)\) and functions
  \(G \in \fsL^{\gamma_0}(\Omega^\ext_t), F = (F_1,\dots,F_m) \in
  \fsL^{\gamma_0}(\Omega^\ext_t,\R^m)\) with
  \(\supp G \cup \supp F \subset \Omega_t \cap \{t \ge t_\init\}\)
  there exists a function
  \(w : \Omega_t^\ext \cap \{t \ge t_\init\} \to \R\) satisfying
  \begin{equation}
    \label{eq:l0-problem}
    \left\{
      \begin{aligned}
        &(X_0 - \oL_0^\ext) w \ge G + \sum_{i=1}^{m} X_i^t F^i
        & &\text{in } \Omega_t^\ext \cap \{t>t_\init\}, \\
        &w \ge 0
        & &\text{on } \{t_\init\} \times B^\ext \cup (t_\init,t_2) \times
        \partial B^\ext
      \end{aligned}
    \right.
  \end{equation}
  and
  \begin{equation}
    \label{eq:l0-estimate}
    \| w \|_{\fsL^{p_1}(\Omega_t \cap \{t > t_\init\})}
    \le C_0\,
    \left(
      \| G \|_{\fsL^{\gamma_0}(\Omega_t \cap \{t > t_\init\})}
      +
      \| F \|_{\fsL^{\gamma_1}(\Omega_t \cap \{t > t_\init\})}
    \right).
  \end{equation}
\end{hypothesis}
\begin{remark}
  In the case of kinetic or Kolmogorov equations there exists a
  fundamental solution of \(X_0 - \oL_0\) over the whole space and we
  can obtain the sought \(w\) and the estimates by the fundamental
  solution with \(\oL^\ext_0 = \oL_0\) and \(B^\ext = \R^n\).

  If we only have local estimates for solutions of \(X_0 - \oL_0\),
  then it is difficult to construct a solution with boundary condition
  \(w \ge 0\) as it is not clear due to the degeneracy of \(\oL_0\)
  what boundary conditions can be imposed. Therefore, we allow a
  different extension \(\oL_0^\ext\) which we can take as
  \(\oL_0^\ext = \oL_0 + \nabla \cdot ((1-\chi)^2 \nabla\cdot)\) with a
  cutoff \(\chi\) and the normal gradient \(\nabla\) on \(\R^n\). This
  then allows the same local estimates and the imposition of boundary
  condition \(w=0\) on
  \(\{t_\init\} \times B^\ext \cup (t_\init,t_2) \times \partial
  B^\ext\).
\end{remark}

In this setting we study the differential operator \(\oP\) with rough
coefficients defined by
\begin{equation}
  \label{eq:def-p}
  \oP u := X_0 u
  - \sum_{i=1}^m X_i^t A^i(t, x, u, \vec{X}u) - B(t,x,u,\vec{X}u)
\end{equation}
where
\begin{align}
  \label{eq:def-a}
  A^i(t,x,z,p)= a^{ij}(t, x) p_j + b^i(t, x) z - f^i(t, x),\\
  \label{eq:def-b}
  B(t,x,z,p)=c^i(t, x)p_i + d(t, x) z - g(t, x).
\end{align}
For the diffusion coefficient assume the uniform lower bound
\(\lambda\) on the symmetric part
\begin{equation}
  \label{eq:bound-a-lower}
  \lambda\, \id \le \left(\frac{a^{ij}+a^{ji}}{2}\right)_{ij}
  \text{ in the sense of matrices}
\end{equation}
and assume that the coefficients are bounded by a function
\(\Lambda=\Lambda(t,x)\) as
\begin{equation}
  \label{eq:bound-a-upper}
  |a^{ij}(t,x)| \le \frac{\Lambda(t,x)}{n}
  \text{ for all } i,j=1,\dots,m.
\end{equation}

Our first result is a supremum bound for subsolutions.
\begin{thm}[Supremum bound]\label{thm:supremum-bound}
  Assume a parabolic cylinder \(\cyl_{S,R}(t_0,x_0)\) around a point
  \((t_0,x_0) \in \R^{1+n}\) with \(0<S\) and \(0<R\) containing the
  cylinder \(\cyl_{s,r}(t_0,x_0)\) with \(0<s<S\) and \(0<r<R\) and
  assume \hypref{h:smooth-ivp} is satisfied for the smooth problem on
  a domain \(\Omega_t\) containing the closure of
  \(\cyl_{S,R}(t_0,x_0)\).  Take \(2<p_0<p_1\) and integrabilities
  \(q_\Lambda,q_b,q_c,q_d\) satisfying
  \begin{align*}
    \frac{1}{q_\Lambda} &\le \min\left\{\frac 12 - \frac{1}{p_0},\frac{1}{\gamma_1} - \frac 12\right\},&
    \frac{1}{q_b} &\le \min\left\{\frac 12 \left(\frac{1}{\gamma_0} -
                    \frac{1}{p_0}\right),\frac 12 - \frac{1}{p_0}\right\},\\
    \frac{1}{q_c} &\le \min\left\{\frac{1}{\gamma_0} - \frac 12,
                    \frac 12 - \frac{1}{p_0}\right\},&
    \frac{1}{q_d} &\le \min\left\{\frac{1}{\gamma_0} - \frac{1}{p_0},
                    1 - \frac{2}{p_0}\right\}.
  \end{align*}

  Then there exist constants \(C_S,\beta>0\)
  such that a function \(u\) satisfying \(\oP u \le 0\) on
  \(\cyl_{S,R}(t_0,x_0)\) for a differential operator \(\oP\) of
  \eqref{eq:def-p} is bounded as
  \begin{equation*}
    \sup_{\cyl_{s,r}(t_0,x_0)} u
    \le C_S\, (1+\delta_S)^\beta
    \big(
    \| u \|_{\fsL^{1}(\cyl_{S,R}(t_0,x_0))}
      +
    \| f \|_{\fsL^{q_b}(\cyl_{S,R}(t_0,x_0))}
    +
    \| g \|_{\fsL^{q_d}(\cyl_{S,R}(t_0,x_0))}
    \big)
  \end{equation*}
  where
  \begin{equation*}
    \delta_S =
    \| \Lambda \|_{\fsL^{q_\Lambda}(\cyl_{S,R}(t_0,x_0))}
    +
    \| b \|_{\fsL^{q_b}(\cyl_{S,R}(t_0,x_0))}
    +
    \| c \|_{\fsL^{q_c}(\cyl_{S,R}(t_0,x_0))}
    +
    \| d \|_{\fsL^{q_d}(\cyl_{S,R}(t_0,x_0))}.
  \end{equation*}
\end{thm}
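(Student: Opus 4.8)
\emph{Strategy and energy estimate.} The plan is to run a De~Giorgi iteration on the super-level sets of $u$, replacing — as announced in the introduction — the usual Sobolev-type gain of integrability for $\fsHyp^1$ (which is exactly the commutator analysis of hypoellipticity) by a comparison with the smooth problem through \hypref{h:smooth-ivp}. For a level $k\ge 0$ I would first fix a cutoff $\zeta$ adapted to the parabolic geometry: built by transporting a spatial cutoff along $X_0$, so that $|X_0\zeta|$ is controlled by the time-widths, cut in space between two nested cylinders, so that $|\vec X\zeta|$ is controlled by the space-widths, and arranged so that $\zeta\equiv 0$ below the bottom time of the larger cylinder and $\zeta\equiv 1$ on the smaller one. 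Testing $\oP u\le 0$ against $\zeta^2(u-k)_+$, using \eqref{eq:bound-a-lower} to retain a coercive term and Young's inequality on every other product, gives a Caccioppoli inequality
\[
  \sup_t\int\zeta^2(u-k)_+^2\,\dd x+\lambda\int\zeta^2\,\bigl|\vec X(u-k)_+\bigr|^2\,\dd x\,\dd t\le(\mathrm{RHS}_k),
\]
where $(\mathrm{RHS}_k)$ is controlled by $\|(u-k)_+\|_{\fsL^{p_0}}$ on the larger cylinder, by the coefficient norms forming $\delta_S$, by $\|f\|_{\fsL^{q_b}}$ and $\|g\|_{\fsL^{q_d}}$, and by positive powers of the super-level measure $\bigl|\{u>k\}\bigr|$.

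\emph{Comparison and gain of integrability.} The key point is that on the interior of $\cyl_{S,R}(t_0,x_0)$ the function $v:=\zeta^2(u-k)_+$, extended by zero, is a \emph{subsolution} of the smooth operator: a Kato-type inequality for $(u-k)_+$ together with the product rule for $X_0$ and $\oL_0$ applied to $\zeta^2(u-k)_+$ yields
\[
  (X_0-\oL_0^{\ext})\,v\le G_k+\sum_{i=1}^m X_i^t F_k^i\qquad\text{on }\Omega_t^{\ext}\cap\{t>t_\init\},
\]
with $t_\init$ the bottom time of the larger cylinder, $v=0$ on the parabolic boundary of $\Omega_t^{\ext}\cap\{t>t_\init\}$, and $\oL_0^{\ext}$ coinciding with $\oL_0$ on $\supp v$. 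Here $G_k$ collects the zeroth-order contributions — $c^iX_iv$, $d\,u$, $g$, and the cutoff terms stemming from $X_0(\zeta^2)$, $\oL_0(\zeta^2)$ and $\vec X(\zeta^2)\cdot\vec X(u-k)_+$ — while $F_k^i$ collects the flux contributions — $(a^{ij}-\delta^{ij})X_jv$, $b^iu$, $f^i$ and the remaining cutoff terms. The conditions on $q_\Lambda,q_b,q_c,q_d$ are calibrated precisely so that Hölder's inequality, fed with $\vec X(u-k)_+\in\fsL^2$ from the energy estimate and $(u-k)_+\in\fsL^{p_0}$ propagated by the iteration, places $G_k\in\fsL^{\gamma_0}$ and $F_k\in\fsL^{\gamma_1}$ on the bounded domain, with $\|G_k\|_{\fsL^{\gamma_0}}+\|F_k\|_{\fsL^{\gamma_1}}$ bounded by $C(1+\delta_S)$ times the $\fsL^2$ norms of $(u-k)_+$ and $\vec X(u-k)_+$, plus $C(\|f\|_{\fsL^{q_b}}+\|g\|_{\fsL^{q_d}})$. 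Then \hypref{h:smooth-ivp} furnishes a supersolution $w$ with $w\ge 0$ on the parabolic boundary and $\|w\|_{\fsL^{p_1}}\le C_0(\|G_k\|_{\fsL^{\gamma_0}}+\|F_k\|_{\fsL^{\gamma_1}})$; the weak maximum principle for the degenerate parabolic operator $X_0-\oL_0^{\ext}$, obtained by testing $(v-w)_+$ and using the coercivity of $\oL_0^{\ext}$ and the transport nature of $X_0$, gives $w\ge v$, hence $(u-k)_+\le w$ on the region $\{\zeta=1\}$, where the $\fsL^{p_1}$ norm of $(u-k)_+$ is then at most $\|w\|_{\fsL^{p_1}}$. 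Since $p_1>p_0>2$, this is a genuine gain of integrability on a strictly smaller cylinder.

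\emph{Iteration, reduction to $\fsL^1$, and the main obstacle.} With levels $k_j=M(1-2^{-j})$ and cylinders $\cyl_j$ shrinking from $\cyl_{S,R}(t_0,x_0)$ to $\cyl_{s,r}(t_0,x_0)$, I would combine the two steps: since $(u-k_j)_+\le(u-k_{j-1})_+$ and the domain is bounded, $\|(u-k_j)_+\|_{\fsL^{p_0}(\cyl_{j-1})}\le CY_{j-1}$ with $Y_{j-1}:=\|(u-k_{j-1})_+\|_{\fsL^{p_1}(\cyl_{j-1})}$, while Chebyshev gives $\bigl|\{u>k_j\}\cap\cyl_{j-1}\bigr|\le(M2^{-j})^{-p_1}Y_{j-1}^{p_1}$; the energy estimate then controls $\|\vec X(u-k_j)_+\|_{\fsL^2}$ on a slightly smaller cylinder and, plugging both into the comparison step, $Y_j:=\|(u-k_j)_+\|_{\fsL^{p_1}(\cyl_j)}$ obeys a recursion $Y_j\le C_\ast(1+\delta_S)^{c}\,b^j\,Y_{j-1}^{1+\alpha}$ for suitable $\alpha>0$, $b>1$, $c>0$, modulo additive terms in $\|f\|_{\fsL^{q_b}}$, $\|g\|_{\fsL^{q_d}}$ and an inverse power of $M$. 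The fast-geometric-convergence lemma then forces $Y_\infty=0$, i.e.\ $\sup_{\cyl_{s,r}(t_0,x_0)}u\le M$, once $M\ge C'_\ast(1+\delta_S)^{c'}\bigl(\|u_+\|_{\fsL^2(\cyl_{S,R}(t_0,x_0))}+\|f\|_{\fsL^{q_b}}+\|g\|_{\fsL^{q_d}}\bigr)$. Running this for every nested pair of cylinders, interpolating $\|u_+\|_{\fsL^2}\le\|u_+\|_{\fsL^\infty}^{1/2}\|u_+\|_{\fsL^1}^{1/2}$, and absorbing via Young's inequality and the standard iteration lemma upgrades the $\fsL^2$ norm on the right to $\|u\|_{\fsL^1(\cyl_{S,R}(t_0,x_0))}$ at the cost of doubling the exponent of $(1+\delta_S)$, which produces the constants $C_S$ and $\beta$ of the statement. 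The main obstacle is the comparison step: establishing rigorously that the truncated cut-off function is a subsolution of $X_0-\oL_0^{\ext}$ with source in the exact divergence-plus-zeroth-order form demanded by \hypref{h:smooth-ivp} — the Kato inequality, the product rule for $\oL_0$, the boundary values, and the weak maximum principle for the degenerate operator — together with the bookkeeping showing that the stated bounds on $q_\Lambda,q_b,q_c,q_d$ are exactly what is needed to control $G_k$ and $F_k$.
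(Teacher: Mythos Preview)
Your strategy is the same as the paper's: a De~Giorgi iteration in which the gain of integrability comes not from a hypoelliptic Sobolev embedding but from comparing the localised truncation of $u$ with a supersolution of the smooth operator produced by \hypref{h:smooth-ivp}, together with the weak maximum principle for $X_0-\oL_0^{\ext}$. The two-stage structure (energy estimate to control $\|\vec X(u-k)_+\|_{\fsL^2}$, then comparison to reach $\fsL^{p_1}$) and the role of the integrability exponents $q_\Lambda,q_b,q_c,q_d$ in placing the sources into $\fsL^{\gamma_0}\times\fsL^{\gamma_1}$ are exactly as in the paper.

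There are two genuine differences. First, the paper does \emph{not} use the plain truncation $(u-k)_+$: it introduces a mollified version $K_{\epsilon,h}(z)=\rho_\epsilon\conv(z-h)_+$ and runs the iteration with $v_k=K_{\epsilon_k,h_k}(u)$, where $\epsilon_k$ shrinks with the level gap. The authors state explicitly that ``a simple truncation is not sufficient'' for the comparison step. The point is that the composition lemma (the paper's \cref{thm:composition-subsolution}) requires a $C^2$ function $\Phi$, and---more importantly---having $v=\Phi(u)$ with smooth $\Phi$ keeps $v\in\fsHyp^1$ with $X_0v\in\fsHyp^{-1}$, which is what the weak maximum principle needs; with the hard cutoff one must justify that $\ind_{\{u>k\}}X_0u$ still lies in $\fsHyp^{-1}$, which is not automatic when $X_0u$ is only a functional. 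The price of the smoothing is an extra term $\epsilon^{-1}\|(f-bu)\ind_{v\le 2\epsilon}\|_{\fsL^{2\gamma_0}}^2$, handled in the iteration by the choice $\epsilon_k\sim h_k-h_{k-1}$ together with the smallness of $|M_k|$. You have correctly singled out the Kato step as ``the main obstacle''; the paper's answer is simply to regularise it away. Second, the reduction from $\fsL^2$ to $\fsL^1$ is organised differently: the paper proves an interpolated version of the gain-of-integrability lemma (via the Giusti iteration $Z(\sigma_1)\le\theta Z(\sigma_2)+\dots$) so that the De~Giorgi sequence starts directly from an $\fsL^1$ bound, whereas you close the iteration in $\fsL^2$ and then interpolate $\|u_+\|_{\fsL^2}\le\|u_+\|_{\fsL^\infty}^{1/2}\|u_+\|_{\fsL^1}^{1/2}$ and absorb. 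Both routes are standard; the paper's ordering makes the dependence on $(1+\delta_S)^\beta$ slightly more transparent.
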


The next step for the regularity of solutions to the rough operator
\(\oP\) is a weak Harnack inequality. For a nonnegative solution \(u\)
and a cylinder \(\cyl_{1,1}(0,x_0)\) we want to conclude that \(u\) is
strictly positive in \(\cyl_{1,1}(0,x_0) \cap \{-1/3 \le t \le 0\}\)
if \(\{u \ge 1\}\) is a set of positive measure in
\(\cyl_{1,1}(0,x_0) \cap \{-1/3 \le t \le 0\}\) beforehand. The idea
is to again use a similar property for the smooth dual problem. For
the conclusion, we need a larger domain with an arbitrary smooth
cutoff. This is captured in the following hypothesis, cf.\
\cref{fig:sketch-small-cutoff}.

\begin{hypothesis}\label{h:small-cutoff}
  From the point \((0,x_0) \in \R^{1+n}\), there exists for every
  \(R>1\) bounded domains
  \(\cyl_{1,2}(0,x_0) \subset \tilde \Sigma_R \subset \Sigma_R \subset
  [-1,0] \times \R^n\) and smooth cutoffs
  \(\tilde \eta_R,\eta_r : [-1,0] \times \R^n \to [0,1]\) with
  \(\supp \tilde \eta_R \subset \tilde \Sigma_R\) and
  \(\supp \eta_R \subset \Sigma_R\) and \(\tilde \eta_R \equiv 1\) on
  \(\cyl_{1,1}(0,x_0)\) and \(\eta_R \equiv 1\) on
  \(\tilde \Sigma_r\) such that
  \begin{equation*}
    X_0 \tilde \eta_R = X_0 \eta_R = 0
  \end{equation*}
  and
  \begin{equation*}
    \| \vec{X} \tilde \eta_R \|_{\fsL^\infty} \le 1,\quad
    \| \vec{X}\vec{X} \tilde \eta_R \|_{\fsL^\infty} \le \frac 1R,\quad
    \| \vec{X} \eta_R \|_{\fsL^\infty} \le 1.
  \end{equation*}
\end{hypothesis}

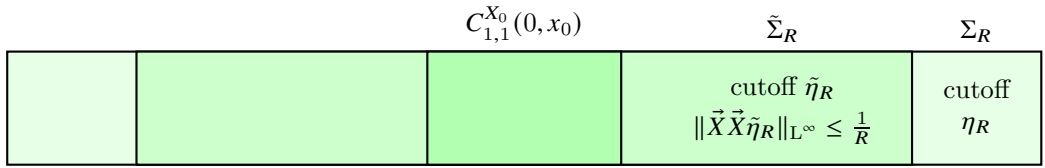
\begin{figure}[thb]
  \centering
  \begin{tikzpicture}[xscale=1.7,yscale=1.5]
    \draw[thick,fill=green!10] (-4,0) -- (4,0) -- (4,-1) -- (-4,-1)
    -- cycle;
    \draw[thick,fill=green!20] (-3,0) -- (3,0) -- (3,-1) -- (-3,-1)
    -- cycle;
    \draw[thick,fill=green!30] (-0.75,0) -- (0.75,0) -- (0.75,-1) -- (-0.75,-1)
    -- cycle;
    \node[anchor=south] at (0,0) {\(\cyl_{1,1}(0,x_0)\)};
    \node[anchor=south] at (2,0) {\(\tilde \Sigma_R\)};
    \node[anchor=south] at (3.5,0) {\(\Sigma_R\)};
    \node at (2,-0.33) {cutoff \(\tilde \eta_R\)};
    \node at (2,-0.66) {\(\| \vec X \vec X \tilde
      \eta_R\|_{\fsL^\infty} \le \frac 1R\)};
    \node at (3.5,-0.33) {cutoff};
    \node at (3.5,-0.66) {\(\eta_R\)};
  \end{tikzpicture}
  \caption{Illustration of the enlarged domain with the control of
    cutoff in \hypref{h:small-cutoff}.}
  \label{fig:sketch-small-cutoff}
\end{figure}

\begin{remark}
  In simple hypoelliptic cases like in kinetic theory, the sets
  \(\tilde \Sigma_R\) and \(\Sigma_R\) can be taken as parabolic
  cylinders \(\cyl_{\bar R,1}(0,x_0)\) for large enough \(\bar R\) and
  \(\tilde \eta, \eta\) can be taken as solutions of
  \(X_0 \tilde \eta = X_0 \eta = 0\) with a prescribed standard cutoff
  at \(\{t=0\}\). For general operators, it can be assumed locally by
  using the underlying scaling of the vector fields
  \(X_0,X_1,\dots,X_m\).
\end{remark}

We now state the assumption of the smooth dual problem assuming
\hypref{h:small-cutoff}, cf. \cref{fig:dual-spreading}.
\begin{hypothesis}\label{h:dual-spreading}
  For the point \(x_0 \in \R^n\), assume constants \(\eta,\mu_0>0\)
  such that the problem
  \begin{equation}
    \label{eq:weak-harnack-dual}
    \left\{
      \begin{aligned}
        &(X_0^t-\oL_0)w = \ind_{E}
        & &\text{in } (-1,0] \times \R^n, \\
        &w(-1,\cdot) = 0
        & &\text{on } \{t=-1\} \times \R^n
      \end{aligned}
    \right.
  \end{equation}
  for a set \(E \subset \cyl_{1,1}(0,x_0) \cap \{t \le -2/3\}\) with
  \(|E| \ge \eta |\cyl_{1,1}(0,x_0) \cap \{t \le -2/3\}|\) has a
  solution \(w \ge 0\) satisfying
  \begin{equation}
    \label{eq:weak-harnack-smooth}
    w(t,x) \ge \mu_0 \quad\text{for}\quad (t,x) \in \cyl_{1/2,2}(0,x_0)
  \end{equation}
  and
  \begin{equation*}
    \| w \|_{\fsL^1((-1,0)\times \R^n)} \lesssim 1.
  \end{equation*}
  Assume further an integrability \(p_2 \ge 2\). Then for any \(R>0\),
  there exists a constant \(C_d(R)\) such that
  \begin{equation*}
    \| w \|_{\fsL^{p_2}(\Sigma_R)}
    + \| \vec{X} w \|_{\fsL^{p_2}(\Sigma_R)}
    \le C_d(R).
  \end{equation*}
\end{hypothesis}

\begin{figure}[htb]
  \centering
  \begin{tikzpicture}[xscale=1.7,yscale=4]
    \draw[thick,fill=green!30] (-2,0) -- (2,0) -- (2,-0.5) -- (-2,-0.5)
    -- cycle;
    \draw[thick,fill=green!30] (-1,-0.66) -- (1,-0.66) -- (1,-1) -- (-1,-1)
    -- cycle;
    \draw[thick] (3,0) -- (-3,0) node[anchor=east] {\(t=0\)};
    \draw[thick] (3,-1) -- (-3,-1) node[anchor=east] {\(t=-1\)};
    \node[cloud, fill=blue!30, draw=black] at (0,-0.82){\(E\)};
    \node[anchor=west] at (2,-0.25) {\(\cyl_{1/2,2}(0,x_0)\)};
    \node[anchor=west] at (1,-0.82) {\(\cyl_{1,1}(0,x_0) \cap
      \{t<-2/3\}\)};
    \node[anchor=north] at (0,-0.5) {\((X_0^t-\oL_0) w = \ind_E\)};
    \node[anchor=west] at (3,-1) {\(w=0\)};
    \node at (0,-0.25) {\(w \ge \mu_0\)};
  \end{tikzpicture}
  \caption{Illustration of \hypref{h:dual-spreading}.}
  \label{fig:dual-spreading}
\end{figure}

We then satisfy a weak Harnack inequality for the rough problem, cf.\
\cref{fig:weak-harnack}.
\begin{thm}[Weak Harnack inequality]\label{thm:weak-harnack}

  Assume that \cref{thm:supremum-bound} applies to
  \(\cyl_{1/2,2}(0,x_0)\) and additionally \hypref{h:small-cutoff}
  and \hypref{h:dual-spreading}. Then there exists \(C_R\) such that
  for \(\delta_S, \Delta>0\) there exists
  \(\mu,\epsilon>0\) such that with \(R = C_R (1+\delta_S)^{\beta}\)
  a nonnegative supersolution \(u\ge 0\) of
  \begin{equation*}
    \oP u \ge 0 \text{ over } \Sigma_R
  \end{equation*}
  with
  \begin{equation}
    \label{eq:weak-harnack-e}
    |E| \ge \eta |\cyl_{1,1}(0,x_0) \cap \{t \le -2/3\}|
    \text{ for }
    E = \{u \ge 1\} \cap \cyl_{1,1}(0,x_0) \cap \{t \le -2/3\}
  \end{equation}
  satisfies
  \begin{equation*}
    u(t,x) \ge \mu \text{ in } \cyl_{1/3,1}(0,x_0)
  \end{equation*}
  if \(\oP\) is an operator of the form~\eqref{eq:def-p} with
  \begin{equation*}
    \| \Lambda \|_{\fsL^{q_\Lambda}(\cyl_{\frac 12,2}(t_0,x_0))}
    +
    \| c \|_{\fsL^{q_c}(\cyl_{\frac 12,2}(t_0,x_0))}
    \le \delta_S
  \end{equation*}
  and
  \begin{equation*}
    \| \Lambda \|_{\fsL^{\bar q_2}(\Sigma_R)}
    + \| \tilde c \|_{\fsL^{\bar q_2}(\Sigma_R)}
    \le \Delta
    \qquad
    \text{with }
    \frac{1}{\bar q_2} + \frac{1}{p_2} = \frac 12
  \end{equation*}
  and
  \begin{equation*}
    \| f - ub \|_{\fsL^{q_b}(\Sigma_R)} + \| g - ud \|_{\fsL^{q_d}(\Sigma_R)}
    \le \epsilon.
  \end{equation*}
\end{thm}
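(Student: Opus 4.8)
The plan is to read the rough supersolution $u$ as a supersolution of the \emph{smooth} operator $X_0-\oL_0$ up to a source controlled by the hypotheses, to compare it with the smooth dual problem of \hypref{h:dual-spreading}, and finally to turn the resulting integral lower bound into a pointwise one via the supremum bound of \cref{thm:supremum-bound}.

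\emph{Step 1 (reduction to a smooth problem with source).}  By the definition of $\oP$, the inequality $\oP u\ge 0$ is
\begin{equation*}
  (X_0-\oL_0)u\;\ge\;-\mathcal E[u],\qquad
  \mathcal E[u]:=\sum_{i=1}^m X_i^t\!\big(X_i u-A^i(t,x,u,\vec{X}u)\big)-B(t,x,u,\vec{X}u),
\end{equation*}
where in $X_i u-A^i=X_iu-a^{ij}X_ju-b^iu+f^i$ and $-B=-c^iX_iu-du+g$ the zeroth order parts are exactly the \emph{small} data $f-ub$ and $g-ud$, while the first order parts carry $a-\id$ and $c$ against $\vec{X}u$. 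Since we only want a lower bound $u\ge\mu$ with $\mu<1$, we replace $u$ by $\min(u,1)=1-(1-u)_+$, which is again a supersolution of $X_0-\oL_0$ but with source $-\mathcal E[u]\,\ind_{\{u<1\}}$; on $\{u<1\}$ a Caccioppoli inequality (testing $\oP u\ge0$ against $(\kappa-u)_+\chi^2$) bounds $\vec{X}u$ in $\fsL^2_{\mathrm{loc}}$ by $\kappa$ plus data, which makes $\mathcal E[u]$ admissible in the pairing below: the $a,c$ contributions will be paired using $\|\Lambda\|_{\fsL^{\bar q_2}}$ and $\|\tilde c\|_{\fsL^{\bar q_2}}$ through the relation $\tfrac1{\bar q_2}+\tfrac1{p_2}=\tfrac12$, the remainder through $\epsilon$.

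\emph{Step 2 (comparison with the smooth dual solution).}  Let $w\ge0$ solve \eqref{eq:weak-harnack-dual} as in \hypref{h:dual-spreading}, so $w\ge\mu_0$ on $\cyl_{1/2,2}(0,x_0)$, $\|w\|_{\fsL^1}\lesssim1$, and $\|w\|_{\fsL^{p_2}(\Sigma_R)}+\|\vec{X}w\|_{\fsL^{p_2}(\Sigma_R)}\le C_d(R)$. With $\tilde\eta_R$ from \hypref{h:small-cutoff} ($X_0\tilde\eta_R=0$, $\tilde\eta_R\equiv1$ on $\cyl_{1,1}(0,x_0)$, $\|\vec{X}\vec{X}\tilde\eta_R\|_{\fsL^\infty}\le1/R$) and a small $\theta>0$, test the supersolution inequality of Step 1 against $\varphi:=(\theta\,w\tilde\eta_R^2-\min(u,1))_+\ge0$. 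On $E$ one has $\min(u,1)=1$, so $\varphi$ vanishes there off $\{w>1/\theta\}$, whose measure is $\le\theta\|w\|_{\fsL^1}$; the $\ind_E$ source thus contributes only $O(\theta^{3/2}C_d(R))$. Rewriting the drift term with the equation for $w$ and $X_0\tilde\eta_R=0$, the diffusion term leaves the coercive quantity $\lambda\|\vec{X}\varphi\|_{\fsL^2}^2+\tfrac12\|\varphi(0,\cdot)\|_{\fsL^2}^2$; on $\{\varphi>0\}$ one has $\vec{X}\min(u,1)=\theta\,\vec{X}(w\tilde\eta_R^2)-\vec{X}\varphi$, which disposes of the $\vec{X}u$ inside $\mathcal E[u]$; the commutators of $\oL_0$ with $\tilde\eta_R^2$ carry $\vec{X}\vec{X}\tilde\eta_R$ and are $O(1/R)$; and, since $\Delta$ is only a bound and not a smallness, the $a,c$ terms are absorbed after a covering of $\Sigma_R$ by finitely many pieces on which $\|\Lambda\|_{\fsL^{\bar q_2}},\|\tilde c\|_{\fsL^{\bar q_2}}$ are arbitrarily small. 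One is left with $\|\varphi\|_{\fsL^1(\cyl_{1/2,2})}$ — equivalently $|\{u<\kappa\}\cap\cyl_{1/2,2}(0,x_0)|$ with $\kappa$ a fixed fraction of $\theta\mu_0$ — as small as desired after fixing $R=C_R(1+\delta_S)^\beta$ large, then $\theta$ small, then $\epsilon$ small.

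\emph{Step 3 (small measure $\Rightarrow$ pointwise bound).}  The nonnegative function $(\kappa-u)_+$ is a subsolution of an operator of the form \eqref{eq:def-p} with the same diffusion and drift coefficients and with data controlled by $\|f-ub\|_{\fsL^{q_b}}$, $\|g-ud\|_{\fsL^{q_d}}$ and, because $(\kappa-u)_+\le\kappa$ and $\|(\kappa-u)_+\|$ is already small by Step 2, by terms that are small. \Cref{thm:supremum-bound} on $\cyl_{1/2,2}(0,x_0)$ (valid by hypothesis) then yields
\begin{equation*}
  \sup_{\cyl_{1/3,1}(0,x_0)}(\kappa-u)_+\;\le\;C_S(1+\delta_S)^\beta\big(\|(\kappa-u)_+\|_{\fsL^1(\cyl_{1/2,2})}+\|f-ub\|_{\fsL^{q_b}(\cyl_{1/2,2})}+\|g-ud\|_{\fsL^{q_d}(\cyl_{1/2,2})}\big)\;\le\;\tfrac\kappa2
\end{equation*}
once $|\{u<\kappa\}\cap\cyl_{1/2,2}(0,x_0)|\le c(1+\delta_S)^{-\beta}$ and $\epsilon\le c\,\kappa(1+\delta_S)^{-\beta}$, both guaranteed by Step 2. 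Hence $u\ge\kappa/2=:\mu$ on $\cyl_{1/3,1}(0,x_0)$.

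\emph{Main obstacle.}  Everything delicate is in Step 2: keeping the non-absorbable error terms quantitatively small and ordering the quantifiers correctly. The cutoff commutators are tamed only through the $1/R$ bound of \hypref{h:small-cutoff}, which forces $R$ — hence $C_d(R)$ — to grow, so $\theta$, and therefore $\mu$, must be correspondingly small; the rough first order terms make sense only because $\vec{X}u$ is Caccioppoli-controlled \emph{on the truncation region}, so the computation has to be carried out on $\{\varphi>0\}$; the non-infinitesimal $\Delta$ forces the covering argument, in which the exponent hypotheses on $q_\Lambda,q_b,q_c,q_d$ (relative to $p_0,p_1,\gamma_0,\gamma_1$) and the relation $\tfrac1{\bar q_2}+\tfrac1{p_2}=\tfrac12$ are precisely what is required; and the choice $R=C_R(1+\delta_S)^\beta$ together with $\theta,\epsilon$ small must be compatible with the $(1+\delta_S)^\beta$ loss incurred by \cref{thm:supremum-bound} in Step 3.
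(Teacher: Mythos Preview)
Your approach is genuinely different from the paper's and contains a real gap in Step~2.

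The paper does \emph{not} compare $u$ (or $\min(u,1)$) directly with the dual solution $w$. Instead it applies the Nash--Moser--Kruzhkov log transform $v=G_\delta(u)$ with $G(z)=(-\log z+z-1)\ind_{z\le1}$, whose key property $G''\ge(G')^2$ yields, via \cref{thm:composition-subsolution}, the subsolution inequality $\tilde P v+\tfrac\lambda2|\vec X v|^2\le0$. This extra square gives a \emph{square-root gain}: integrating over $\Sigma_R$ produces $\|\vec X v\|_{\fsL^2}^2\lesssim C(R)\,G_\delta(0)$, and the \emph{linear} dual pairing $K(t)=\int v\,w\,\tilde\eta_R$ then gives $\|v\|_{\fsL^1(\cyl_{1/2,2})}\lesssim G_\delta(0)/R+C(R)(1{+}\Delta)\|\vec X v\|_{\fsL^2}$, hence after the supremum bound $\|v\|_{\fsL^\infty}\lesssim G_\delta(0)/R+C(R)(1{+}\Delta)\sqrt{G_\delta(0)}$. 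Since the trivial bound is $v\le G_\delta(0)$, taking $R$ large and then $\delta\to0$ (so $G_\delta(0)\to\infty$) beats the linear term by a factor $\sqrt{G_\delta(0)}$. This square-root gain is the whole mechanism.

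Your Step~2 lacks any such gain. Testing $\oP u\ge0$ against $\varphi=(\theta w\tilde\eta_R^2-\min(u,1))_+$ and using the equation for $w$, the diffusion terms leave, after cancellation of $\oL_0$, an error $\theta\!\int(a^{ij}-\delta^{ij})X_jw\,\tilde\eta_R^2\,X_i\varphi$. By H\"older (with exactly your exponent relation $\tfrac1{\bar q_2}+\tfrac1{p_2}=\tfrac12$) and Young this is $\le\tfrac\lambda4\|\vec X\varphi\|_{\fsL^2}^2+C\theta^2\Delta^2 C_d(R)^2$, so the energy estimate yields $\sup_t\|\varphi(t)\|_{\fsL^2}^2+\|\vec X\varphi\|_{\fsL^2}^2\lesssim\theta^2\Delta^2 C_d(R)^2+O(\epsilon)+O(1/R)$. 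But to read off $|\{u<\kappa\}\cap\cyl_{1/2,2}|$ with $\kappa\sim\theta\mu_0$ you must divide by $\theta$, and the principal term becomes $\Delta\,C_d(R)$---independent of $\theta$ and not small (indeed $C_d(R)$ grows with $R$). The ``covering of $\Sigma_R$ into pieces on which $\|\Lambda\|_{\fsL^{\bar q_2}}$ is small'' does not rescue this: summing over $N$ pieces reintroduces a factor $N^{1/2}$, and there is no absorption since the pairing is against $\vec X w\in\fsL^{p_2}$, not against $\vec X\varphi$ itself. Without a device like the log transform that makes $\|\vec X v\|$ sublinear in the a~priori bound, the rough principal part $(a-\id)$ cannot be closed off; this is precisely why the paper passes through $G_\delta$.
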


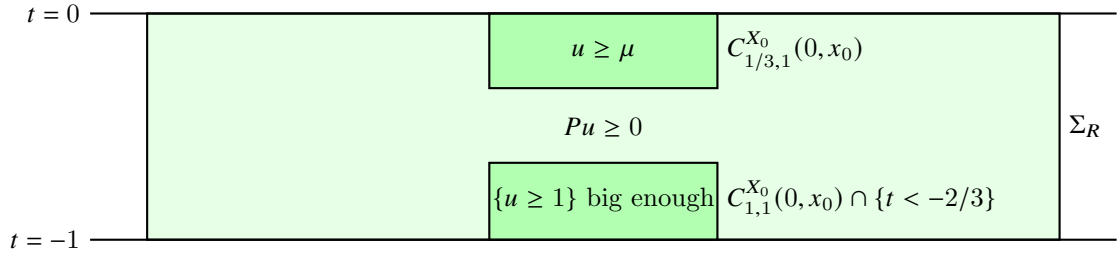
\begin{figure}[htb]
  \centering
  \begin{tikzpicture}[xscale=1.5,yscale=3]
    \draw[thick,fill=green!10] (-4,0) -- (4,0) -- (4,-1) -- (-4,-1) --
    cycle; \draw[thick,fill=green!30] (-1,0) -- (1,0) -- (1,-0.33) --
    (-1,-0.33) -- cycle; \draw[thick,fill=green!30] (-1,-0.66) --
    (1,-0.66) -- (1,-1) -- (-1,-1) -- cycle; \draw[thick] (4.5,0) --
    (-4.5,0) node[anchor=east] {\(t=0\)}; \draw[thick] (4.5,-1) --
    (-4.5,-1) node[anchor=east] {\(t=-1\)}; \node[anchor=west] at
    (1,-0.166) {\(\cyl_{1/3,1}(0,x_0)\)}; \node[anchor=west] at
    (1,-0.82) {\(\cyl_{1,1}(0,x_0) \cap \{t<-2/3\}\)};
    \node[anchor=west] at (4,-0.5) {\(\Sigma_R\)}; \node at (0,-0.166)
    {\(u \ge \mu\)}; \node at (0,-0.5) {\(\oP u \ge 0\)}; \node at
    (0,-0.82) {\(\{u \ge 1\}\) big enough};
  \end{tikzpicture}
  \caption{Illustration of weak Harnack inequality
    (\cref{thm:weak-harnack}).}
  \label{fig:weak-harnack}
\end{figure}

After some preliminary remarks in \cref{sec:preliminaries}, we will
prove \cref{thm:supremum-bound} in \cref{sec:l-inf} and
\cref{thm:weak-harnack} in \cref{sec:harnack}. Both proofs are
quantitative. The proofs are presented as a priori estimates and we
briefly discuss the required function space in
\cref{sec:function-spaces}.

\subsection{Application to hypoelliptic operator}

In the first work of hypoellipticity by
\textcite{hoermander-1967-hypoelliptic}, the key estimate is that a
solution \(u\) to \((X_0-\oL_0)u = G + X_i^t F_i\) satisfies
\(\| u \|_{\fsH^s} \lesssim \| u \|_2 + \| F \|_2 + \| G \|_2\) for
some \(s>0\) under the commutator condition. This then shows the
estimate \hypref{h:smooth-ivp} by Sobolev embedding with
\(\gamma_0=\gamma_1=2\), see \cref{sec:construction-comparison}.

In this general setting, \textcite{bony-1969-principe-harnack-cauchy}
proved a strong maximum principle which yields the claimed spreading
of positivity in \hypref{h:dual-spreading} by a compactness argument.

In the kinetic or general Kolmogorov setting, there is an explicit
fundamental solution from which all estimates on the smooth problem
can be easily verified. Using the best possible integrabilities, the
assumed integrabilities on the lower order terms are as expected from
the classical parabolic case arbitrary close to the integrabilities
expected from scaling. For the upper bound \(\Lambda\) on the
diffusion coefficients \(a^{ij}\), our result matches
\textcite{trudinger-1971} in the classical case.

The kinetic or Kolmogorov equation have an underlying scaling and
group structure (corresponding to Galilean transformation in the
kinetic theory) which allows to conclude from the weak Harnack result
(\cref{thm:weak-harnack}) a Hölder regularity by a standard argument,
see e.g.\
\cite[Appendix~B]{guerand-imbert-2021-preprint-log-transform}.

In the general setting, \textcite{rothschild-stein-1977-hypoelliptic}
show that every hypoelliptic operator can be approximated locally by
an operator with a suitable scaling and group structure, see also
\cite{sanchez-calle-1984-fundamental,bramanti-brandolini-lanconelli-uguzzoni-2010-non-hoermander}
for use of this idea in order to obtain estimates on the smooth
problem. The application to this general setting will be explained in
a forthcoming paper.

\subsection{Comparison with literature}

As far as we are aware, there are no results in this general setting
for rough coefficients. Even in the more studied special case of
kinetic (or Kolmogorov) equations our proofs appear to be new and for
the supremum bound (\cref{thm:supremum-bound}) it appears that we
require less integrability on the coefficients as e.g. in
\cite{anceschi-rebucci-2021-preprint-weak-regularity-kolmogorov}
(other works for the supremum bound are
\cite{pascucci-polidoro-2003-gaussian-upper-bound,pascucci-polidoro-2004-mosers,cinti-polidoro-2008-pointwise-gaussian-upper-bounds,wang-zhang-2009-ultraparabolic,wang-zhang-2011-ultraparabolic,golse-imbert-mouhot-vasseur-2019-harnack-inequality,guerand-mouhot-2021-preprint-quantitative-de-giorgi,anceschi-polidoro-ragusa-2019-mosers-kolmogorov}).

For the proof of the weak Harnack inequality we use a log transform as
it already appears in the early work by
\textcite{nash-1958-continuity} on rough coefficients. This has been
used heavily for the study of equations with rough coefficients
\cite{moser-1961-harnacks,moser-1964-harnack,kruzhkov-1963,kruzhkov-1968-apriori}
and has also been used in the kinetic and Kolmogorv setting
\cite{guerand-imbert-2021-preprint-log-transform,anceschi-rebucci-2021-preprint-weak-regularity-kolmogorov}. Here
we differ by using the dual problem to conclude the result (instead of
a Poincaré inequality inspired by the framework of
\cite{hoermander-1967-hypoelliptic,albritton-armstrong-mourrat-novack-2019-preprint-variational-fokker-planck}).

\section{Preliminaries}
\label{sec:preliminaries}

For the rough operator \(\oP\), we define the principal part of
\(\oP\) as
\begin{equation*}
  \oL_p = X_i^t a^{ij} X_j.
\end{equation*}
When deriving estimates, note that \(X_i^t\) satisfies the chain rule
\begin{equation*}
  X_i^t(\alpha\, \beta) = X_i(\alpha)\, \beta + \alpha\, X_i^t(\beta).
\end{equation*}

As a first step, we note how a subsolution behaves under a
composition.
\begin{lemma}\label{thm:composition-subsolution}
  Let \(\Phi : \R \to \R\) be a smooth function with \(\Phi'\ge 0\)
  and \(\Phi'' \ge 0\).  Suppose the operator \(\oP\) of the form
  \eqref{eq:def-p} and a subsolution \(\oP u \le 0\). Then
  \(v = \Phi \circ u\) satisfies
  \begin{equation*}
    \tilde \oP v
    := X_0v
    - X_i^t\tilde{A}^i(t,x,v+h,\vec{X}v)
    - \tilde{B}(t,x,v+h,\vec{X}v)
    \le -\Phi''(u)\, \frac{\lambda}{2} |\vec{X} u|^2
    \le 0
  \end{equation*}
  where \(\tilde A\) and \(\tilde B\) are of the same form
  \eqref{eq:def-a} and \eqref{eq:def-b}, respectively, with new
  coefficients
  \begin{align*}
    \tilde a^{ij}
    &= a^{ij},&
    \tilde b^i
    &= 0,\\
    \tilde c^i &= c^i,&
    \tilde d
    &= 0,\\
    \tilde f^i
    &= \Phi'(u) \left(f^i - b^i u\right),&
    \tilde{g}
    &=
      \Phi'(u)
      \left(g - d u\right)
      - \frac{\Phi''(u)}{2\lambda}
      |f - bu|^2.
  \end{align*}
  The same result holds if \(\oP u \ge 0\) and \(\Phi'\le 0\) and
  \(\Phi'' \ge 0\).
\end{lemma}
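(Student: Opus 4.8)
The plan is a direct computation; since $u$ is only a classical weak solution I would read the statement as an a priori estimate, the function-space justification of the chain rules below being exactly the point deferred to \cref{sec:function-spaces}. Note first that the constant $h$ plays no role, since $\tilde b^i = 0$ and $\tilde d = 0$ mean that $z$ does not actually enter $\tilde A^i$ or $\tilde B$. As $X_1,\dots,X_m$ and $X_0 = \partial_t + \tilde X_0$ are first-order differential operators, the chain rule gives $\vec{X} v = \Phi'(u)\,\vec{X} u$ and $X_0 v = \Phi'(u)\, X_0 u$. First I would multiply the subsolution inequality $\oP u \le 0$, that is $X_0 u \le X_i^t A^i(t,x,u,\vec{X}u) + B(t,x,u,\vec{X}u)$, by the nonnegative factor $\Phi'(u)$ to obtain
\[
  X_0 v \;\le\; \Phi'(u)\, X_i^t A^i(t,x,u,\vec{X}u) + \Phi'(u)\, B(t,x,u,\vec{X}u).
\]
In the second case ($\oP u \ge 0$ and $\Phi'\le 0$) multiplication by $\Phi'(u)\le 0$ again produces this same $\le$ inequality, so the rest of the argument is literally unchanged and I only discuss the first case.

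Next I would pull $\Phi'(u)$ through $X_i^t$ using the product rule $X_i^t(\alpha\beta) = X_i(\alpha)\beta + \alpha X_i^t(\beta)$ recalled just above the lemma, which gives $\Phi'(u)\, X_i^t A^i = X_i^t(\Phi'(u) A^i) - \Phi''(u)\,(X_i u)\,A^i$, and then identify the resulting coefficients. One checks that $\Phi'(u) A^i(t,x,u,\vec{X}u) = a^{ij} X_j v + \Phi'(u)(b^i u - f^i) = \tilde A^i(t,x,v+h,\vec{X}v)$ (using $\tilde b^i = 0$), and that $\Phi'(u) B(t,x,u,\vec{X}u) = c^i X_i v - \Phi'(u)(g - du) = \tilde B(t,x,v+h,\vec{X}v) - \frac{\Phi''(u)}{2\lambda}|f-bu|^2$ by the definitions of $\tilde g$ and $\tilde d = 0$. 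Collecting the terms yields
\[
  \tilde\oP v = X_0 v - X_i^t\tilde A^i - \tilde B
  \;\le\; -\Phi''(u)\,(X_i u)\,A^i(t,x,u,\vec{X}u) - \frac{\Phi''(u)}{2\lambda}\,|f - bu|^2 .
\]

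It then remains to absorb the first term on the right. Writing $A^i(t,x,u,\vec{X}u) = a^{ij} X_j u - (f^i - b^i u)$, the lower bound \eqref{eq:bound-a-lower} (only the symmetric part enters the quadratic form) gives $a^{ij}(X_i u)(X_j u) \ge \lambda|\vec{X}u|^2$, while Young's inequality gives $(X_i u)(f^i - b^i u) \le \frac{\lambda}{2}|\vec{X}u|^2 + \frac{1}{2\lambda}|f - bu|^2$. Since $\Phi'' \ge 0$, combining these yields $-\Phi''(u)(X_i u)A^i \le -\Phi''(u)\frac{\lambda}{2}|\vec{X}u|^2 + \frac{\Phi''(u)}{2\lambda}|f - bu|^2$, so the two $|f-bu|^2$ contributions cancel and $\tilde\oP v \le -\Phi''(u)\frac{\lambda}{2}|\vec{X}u|^2 \le 0$, as claimed.

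The calculation is routine bookkeeping; the only genuine subtlety, which I expect to be the main (mild) obstacle, is that $u$ is merely a classical weak solution with $X_0 u \in \fsHyp^{-1}$, so the chain rules $\vec{X}v = \Phi'(u)\vec{X}u$ and $X_0 v = \Phi'(u) X_0 u$ and the multiplication of the differential inequality by $\Phi'(u)$ must be read in the weak sense, tested against nonnegative test functions. This is precisely the sort of point treated in \cref{sec:function-spaces}; alternatively one simply reads the lemma as an a priori inequality for smooth $u$, which is the form in which it is used in the sequel.
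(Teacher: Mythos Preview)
Your proof is correct and follows essentially the same route as the paper: multiply the differential inequality by $\Phi'(u)$, use the product rule $X_i^t(\alpha\beta)=X_i(\alpha)\beta+\alpha X_i^t(\beta)$ to commute $\Phi'(u)$ past $X_i^t$, and then absorb the cross term $(X_i u)(f^i-b^i u)$ by Young's inequality against the coercivity $a^{ij}X_iu\,X_ju\ge\lambda|\vec Xu|^2$. The only cosmetic difference is that the paper first splits off the principal part and computes $(X_0-\oL_p)v$ before handling the lower-order terms, whereas you keep $A^i$ intact throughout; the algebra and the estimates are identical.
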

\begin{proof}
  Using that \(\oP u \le 0\) and \(\Phi' \ge 0\), we find
  \begin{align*}
    &(\partial_t + X_0 - \oL_p)(v) \\
    &= \Phi'(u)\, (X_0 - \oL_p)(u)
      - \Phi''(u)\, a^{ij} X_iu X_ju \\
    &\le
      \Phi'(u)
      \left[
      X_i^t(b^i u - f^i) + (c^i X_i u + du-g)
      \right]
      - \Phi''(u)\, a^{ij} X_iu X_ju \\
    &= -X_i^t(\tilde f) - (b^iu-f^i) \Phi''(u) X_i u
      + c^i X_i v
      + \Phi'(u)\, (du-g)
      - \Phi''(u) a^{ij} X_i u X_j u.
  \end{align*}
  Using the square control \(\Phi''(u) a^{ij}X_iuX_ju\), we estimate
  \begin{equation*}
    - \Phi''(u)
    \left[
      \left(b^i u - f^i\right) X_iu - a^{ij}X_iu X_ju
    \right]
    \le \Phi''(u)
    \left[
      \frac{\left|f-bu\right|^2}{2\lambda}
      - \frac{\lambda}{2} |\vec{X} u|^2
    \right],
  \end{equation*}
  which then yields the result. The case \(\oP u \ge 0\) and
  \(\Phi'\le 0\) and \(\Phi'' \ge 0\) follows in the same way.
\end{proof}

In the proof of the supremum bound (\cref{thm:supremum-bound}), we
need several spatial cutoffs \(\eta\) and temporal cutoffs \(\tau\)
within the overall set \(\cyl_{S,R}(t_0,x_0)\).

For the temporal cutoff \(\tau\) between times \(s_1<s_2\),
i.e. \(\tau : \R \to [0,1]\) with \(\tau(t)=0\) for \(t\le s_1\) and
\(\tau(t)=1\) for \(t \ge s_2\), we can rescale a standard cutoff and
therefore have uniformly
\begin{equation*}
  \| \partial_t \tau \|_\infty \lesssim \frac{1}{s_2-s_1}.
\end{equation*}

For the spatial cutoff \(\eta : \R^{1+n} \to [0,1]\) between radii
\(r_1<r_2\) around \((t,x)\) over a time length \(S\), we impose that
\(X_0 \eta = 0\) and \(\eta=0\) outside \(\cyl_{S,r_2}(t,x)\) and
\(\eta=1\) inside \(\cyl_{S,r_1}(t,x)\). These can be constructed by
taking a cutoff \(\bar \eta\) between the balls \(B_{r_1}(x)\) and
\(B_{r_2}(x)\) and taking \(\eta\) as solution to
\begin{equation*}
  \left\{
    \begin{aligned}
      &X_0 \eta = 0,\\
      &\eta(t,\cdot) = \bar \eta.
    \end{aligned}
  \right.
\end{equation*}
By the definition of the parabolic cylinder, this yields a required
cutoff. Moreover, as it is always constructed within a fixed bounded
set, the smoothness of the vector fields implies
\begin{equation*}
  \| \vec X \eta \|_\infty \lesssim \frac{1}{r_2-r_1}.
\end{equation*}

\section{Local supremum bound}
\label{sec:l-inf}

In this section we prove \cref{thm:supremum-bound} by the de~Giorgi
method using the bound \hypref{h:smooth-ivp} on the smooth problem. In
the special setting of kinetic or Kolmogorov equations, the knowledge
of the fundamental solution for the smooth problem has been used in
\cite{pascucci-polidoro-2004-mosers,guerand-mouhot-2021-preprint-quantitative-de-giorgi,anceschi-rebucci-2021-preprint-weak-regularity-kolmogorov}.
In this setting the main difference is that these works use a Moser
iteration and do not obtain the integrability assumptions on the
coefficients.

The classical idea is to consider \((u-h_k)_+\) for a sequence
of cutoffs \((h_k)_k\) on nested cylinders
\(C_1 \supset C_2 \supset \dots\) and deduce that
\(\|(u-h_k)_+\| \to 0\) for a suitable norm while
\(h_k \uparrow D < \infty\). In the non-degenerate setting, this
convergence is obtained by a direct energy estimate which yields by
Sobolev embedding a gain of integrability.

In our setting, we not only perform a direct energy estimate but also
compare the subsolution of the rough problem to a solution of the
smooth problem. Hence a simple truncation is not sufficient and we
need a smoothed cutoff.

Let \(\rho \in \fsC^\infty(\R)\) be a non-negative mollification
kernel with \(\supp \rho \subset [-1,1]\) and set for \(\epsilon > 0\)
\begin{equation*}
  \rho_\epsilon(z) = \frac{1}{\epsilon} \rho\left(\frac{z}{\epsilon}\right).
\end{equation*}
As replacement for the truncation, we then define for \(h \in \R\) and
\(\epsilon>0\) the function \(K_{\epsilon,h} \in \fsC^\infty(\R)\) by
\begin{equation*}
  K_{\epsilon,h}(z) = \rho_{\epsilon} \conv (z-h)_+.
\end{equation*}

By considering \(K_{\epsilon,h}(u)\) instead of the truncation
\((u-h)_+\), we find the gain of integrability in the following lemma.

\begin{lemma}\label{thm:gain-integrability}
  Assume \hypref{h:smooth-ivp} on \(\Omega_t\) and let \(q_0,q_1\) be
  the integrabilities given by
  \begin{equation*}
    \frac 12 + \frac{1}{q_0} = \frac{1}{\gamma_0}
    \text{ and }
    \frac 12 + \frac{1}{q_1} = \frac{1}{\gamma_1}.
  \end{equation*}

  Then there exists a constant \(C_2\) with the following gain of
  integrability: for nested parabolic cylinder
  \(\cyl_{s,r}(t_0,x_0) \subset \cyl_{S,R}(t_0,x_0) \subset\subset
  \Omega_t\) and \(u\) with \(\oP u \le 0\) over
  \(\cyl_{S,R}(t_0,x_0)\), the composition
  \begin{equation*}
    v=K_{\epsilon,h}(u)
  \end{equation*}
  satisfies for any \(\epsilon>0\) and \(h \in \R\) that
  \begin{equation*}
    \begin{aligned}
      \| v \|_{\fsL^{p_1}(\cyl_{s,r})}
      &\le C_2\,
        \left[
        1 + \frac{1}{S-s} + \frac{1}{R-r}
        \right]^2
        \left[1 + \| \Lambda \|_{\fsL^{q_1}(M)}
        + \| c \|_{\fsL^{q_0}(M)}
        \right]\\
      &\qquad \left\{
        \| (1+\Lambda) v \|_{\fsL^2(M)}
        + \| \bar f \|_{\fsL^2(M)}
        + \| c v \|_{\fsL^2(M)}
        + \sqrt{\|\bar g v \|_{\fsL^1(M)}}
        \right\} \\
      &+ C_2\,
        \left[
        1 + \frac{1}{S-s} + \frac{1}{R-r}
        \right]
        \left(
        \| \bar g \|_{\fsL^{\gamma_0}(M)}
        + \frac{1}{\epsilon} \| \bar f \ind_{v\le 2\epsilon} \|_{\fsL^{2\gamma_0}(M)}^2
        \right)
    \end{aligned}
  \end{equation*}
  where \(\bar f = f-bu\) and \(\bar g = g - du\) and
  \begin{equation*}
    M = \cyl_{S,R}(t_0,x_0) \cap \{v > 0\}.
  \end{equation*}
\end{lemma}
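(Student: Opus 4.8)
The plan is to apply \cref{thm:composition-subsolution} with \(\Phi = K_{\epsilon,h}\), which is smooth, nondecreasing and convex: then \(v = K_{\epsilon,h}(u)\) is a subsolution \(\tilde\oP v \le 0\) of an operator of the form~\eqref{eq:def-p} with coefficients \(\tilde a = a\), \(\tilde b = \tilde d = 0\), \(\tilde c = c\), \(\tilde f = K_{\epsilon,h}'(u)\bar f\) and \(\tilde g = K_{\epsilon,h}'(u)\bar g - \frac{1}{2\lambda}K_{\epsilon,h}''(u)|\bar f|^2\) (the \(z\)-shift there is immaterial since \(\tilde b = \tilde d = 0\)). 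Three elementary properties of \(K_{\epsilon,h}\) will be used: \(0 \le K_{\epsilon,h}' \le 1\), so \(|\tilde f| \le |\bar f|\); \(K_{\epsilon,h}' = 0\) on \(\{u \le h-\epsilon\} = \{v = 0\}\), so \(\tilde f\), \(\tilde g\) and \(\vec{X}v = K_{\epsilon,h}'(u)\vec{X}u\) all vanish outside \(M\); and \(K_{\epsilon,h}'' = \rho_\epsilon(\cdot - h)\) is supported in \(\{v \le 2\epsilon\}\) with \(\|K_{\epsilon,h}''\|_{\fsL^\infty} \lesssim \frac{1}{\epsilon}\) — this last point is where the term \(\frac{1}{\epsilon}\|\bar f\ind_{v\le2\epsilon}\|_{\fsL^{2\gamma_0}(M)}^2\) originates.

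Next I would fix intermediate sizes \(s<s'<S\), \(r<r'<R\) (say \(s' = \frac{s+S}{2}\), \(r' = \frac{r+R}{2}\)) and, as in \cref{sec:preliminaries}, cutoffs \(\psi_j = \tau_j\eta_j\) (\(j=1,2\)) with \(X_0\eta_j = 0\), such that \(\psi_1 \equiv 1\) on \(\cyl_{s,r}\), \(\supp\psi_1 \subset \cyl_{s',r'}\), \(\psi_2 \equiv 1\) on \(\cyl_{s',r'}\) and \(\supp\psi_2 \subset \cyl_{S,R}\); crucially \(\psi_2 \equiv 1\) on \(\supp\psi_1\), so that \(\vec{X}v\) can be controlled on \(\supp\psi_1\) and on \(\supp\vec{X}\psi_1\) by \(\|\psi_2\vec{X}v\|_{\fsL^2}\). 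Then \(X_0\psi_j = \eta_j\partial_t\tau_j\) involves no spatial derivative of \(\eta_j\), and \(\|\vec{X}\psi_j\|_{\fsL^\infty}, \|\partial_t\tau_j\|_{\fsL^\infty} \lesssim 1 + \frac{1}{S-s} + \frac{1}{R-r}\), while \(\|\oL_0\psi_1\|_{\fsL^\infty} \lesssim (1 + \frac{1}{S-s} + \frac{1}{R-r})^2\). The first step is an energy estimate: testing \(\tilde\oP v \le 0\) against \(\psi_2^2 v \ge 0\) and integrating by parts (the boundary term at \(t=t_0\) has the good sign, the one at \(t=t_0-S\) vanishes, the \(\tilde X_0\)-divergence terms are bounded), using the coercivity~\eqref{eq:bound-a-lower} of the principal part and Young's inequality on the remaining terms (with \(v \le 2\epsilon\) on \(\supp K_{\epsilon,h}''\)), one obtains
\[
  \|\psi_2\vec{X}v\|_{\fsL^2}
  \lesssim
  \Big(1 + \tfrac{1}{S-s} + \tfrac{1}{R-r}\Big)
  \Big(
  \|(1+\Lambda)v\|_{\fsL^2(M)}
  + \|\bar f\|_{\fsL^2(M)}
  + \|c v\|_{\fsL^2(M)}
  + \sqrt{\|\bar g v\|_{\fsL^1(M)}}
  \Big).
\]

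The second step compares \(\psi_1 v\) with the smooth problem. Expanding \((X_0 - \oL_0)(\psi_1 v)\) with the product rules of \cref{sec:preliminaries} and inserting the inequality \(X_0 v \le X_i^t(a^{ij}X_jv - \tilde f^i) + c^iX_iv - \tilde g\), one finds \((X_0 - \oL_0)(\psi_1 v) \le X_i^t F^i + G\) on \(\cyl_{S,R}\) with \(F^i = \psi_1(a^{ij}X_jv - X_iv - \tilde f^i)\) and \(G\) collecting \(\psi_1(c^iX_iv - \tilde g)\) together with the cutoff commutator terms \((\vec{X}\psi_1)\cdot(a\vec{X}v - \vec{X}v - \tilde f)\), \((X_0\psi_1)v\), \((\oL_0\psi_1)v\) and lower-order bounded-coefficient \(v\)-terms. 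Since \(F, G\) are supported in \(\cyl_{s',r'} \cap M \cap \{t \ge t_{\init}\}\) with \(t_{\init} := t_0 - s'\), \hypref{h:smooth-ivp} provides \(w \ge 0\) with \((X_0 - \oL_0^{\ext})w \ge G + X_i^t F^i\) and \(\|w\|_{\fsL^{p_1}} \le C_0(\|G\|_{\fsL^{\gamma_0}} + \|F\|_{\fsL^{\gamma_1}})\); by the weak maximum principle for the smooth (possibly degenerate) operator, \(w \ge \psi_1 v\), so \(\|v\|_{\fsL^{p_1}(\cyl_{s,r})} \le C_0(\|G\|_{\fsL^{\gamma_0}} + \|F\|_{\fsL^{\gamma_1}})\).

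It then remains to estimate \(\|F\|_{\fsL^{\gamma_1}}\) and \(\|G\|_{\fsL^{\gamma_0}}\). Using \(\fsL^2 \hookrightarrow \fsL^{\gamma_1} \hookrightarrow \fsL^{\gamma_0}\) on the bounded set \(\cyl_{S,R}\) (this is where \(\gamma_0 \le \gamma_1 \le 2\) is used, at the cost of harmless powers of \(|\cyl_{S,R}|\)) and Hölder with \(\frac{1}{q_1} = \frac{1}{\gamma_1} - \frac{1}{2}\), \(\frac{1}{q_0} = \frac{1}{\gamma_0} - \frac{1}{2}\), the products with \(a\) become \(\|\Lambda\|_{\fsL^{q_1}}\|\psi_j\vec{X}v\|_{\fsL^2}\) and those with \(c\) become \(\|c\|_{\fsL^{q_0}}\|\psi_j\vec{X}v\|_{\fsL^2}\) (with \(\|\Lambda\|_{\fsL^{q_0}} \lesssim \|\Lambda\|_{\fsL^{q_1}}\), and \(\psi_2 \equiv 1\) on \(\supp\vec{X}\psi_1\)), the \(\tilde f\)-terms are controlled by \(\|\bar f\|_{\fsL^2(M)}\), the zeroth-order cutoff terms by \((1 + \frac{1}{S-s}+\frac{1}{R-r})^2\|(1+\Lambda)v\|_{\fsL^2(M)}\), and \(\psi_1\tilde g\) splits, by the properties of \(K_{\epsilon,h}'\) and \(K_{\epsilon,h}''\), into \(\|\bar g\|_{\fsL^{\gamma_0}(M)} + \frac{1}{\epsilon}\|\bar f\ind_{v\le2\epsilon}\|_{\fsL^{2\gamma_0}(M)}^2\). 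Plugging the energy estimate into the \(\vec{X}v\)-terms (which costs one further factor \(1 + \frac{1}{S-s}+\frac{1}{R-r}\) whenever a \(\vec{X}\psi_1\) is also present) assembles exactly the claimed bound, with \(C_2\) absorbing \(C_0\), the embedding constants and dimensional factors. I expect the main work to be precisely this bookkeeping — keeping the two cutoff layers disentangled so that every occurrence of \(\vec{X}v\) in the comparison sits where the energy estimate applies, and tracking where the two powers of \(1+\frac{1}{S-s}+\frac{1}{R-r}\) and the exponents \(q_0,q_1\) arise; the only non-computational subtlety is the weak maximum principle for the extended, possibly degenerate operator \(\oL_0^{\ext}\) of \hypref{h:smooth-ivp}, which I would invoke as a standard comparison statement for the smooth problem.
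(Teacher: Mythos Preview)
Your proposal is correct and follows essentially the same two-step strategy as the paper: apply \cref{thm:composition-subsolution}, perform an $\fsL^2$ energy estimate on the outer layer to control $\|\vec X v\|_{\fsL^2}$, then compare $\psi_1 v$ with a solution $w$ from \hypref{h:smooth-ivp} via the weak maximum principle and estimate $\|F\|_{\fsL^{\gamma_1}}$, $\|G\|_{\fsL^{\gamma_0}}$ by H\"older with exponents $q_0,q_1$. The only bookkeeping difference is that the paper first writes $(X_0-\oL_p)(\tau_2\eta_2 v)\le \tilde G + X_i^t\tilde F^i$ and then adds the divergence-form correction $(\oL_p-\oL_0^\ext)(\tau_2\eta_2 v)$ inside the data for $w$, which keeps the cutoff commutator $-\tau_2 v\,X_i\eta_2$ in $F$ and thereby avoids ever needing $\|\oL_0\psi_1\|_{\fsL^\infty}$; your direct expansion of $(X_0-\oL_0)(\psi_1 v)$ instead places a $(\oL_0\psi_1)v$ term in $G$, which is harmless since the resulting $(1+\tfrac{1}{S-s}+\tfrac{1}{R-r})^2\|v\|_{\fsL^2(M)}$ still fits the stated bound.
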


\begin{proof}
  First note that
  \begin{align*}
    K_{\epsilon,h}'(z) &= \rho_{\epsilon} \conv \ind_{[h,\infty)} \ge
                         0, &&\text{and}&
    K_{\epsilon,h}''(z) &= \rho_{\epsilon} \conv \delta_{h}
                          = \frac{1}{\epsilon}
                          \rho\left(\frac{z-h}{\epsilon}\right)
                          \ge 0.
  \end{align*}
  Hence we can apply \cref{thm:composition-subsolution} to find that
  \(v\) satisfies with new coefficients
  \(\tilde a, \tilde b, \tilde c, \tilde d, \tilde f, \tilde g\)
  \begin{equation}
    \label{eq:sub-v}
    \tilde \oP v \le 0.
  \end{equation}

  The results is then obtained in two steps as illustrated in
  \cref{fig:sketch-integrability} with intermediate scale
  \begin{equation*}
    s < s_1 < S \qquad \text{and} \qquad r < r_1 < R
  \end{equation*}
  and corresponding cylinders
  \(\cyl_{s,r} \subset \cyl_{s_1,r_1} \subset \cyl_{S,R}\) (always
  with the base point \((t_0,x_0)\) which we therefore omit within
  this proof).  Performing a \(\fsL^2\) energy estimate with a cutoff
  from \(\cyl_{s_1,r_1}\) to \(\cyl_{S,R}\) we obtain the control
  \begin{equation}\label{eq:int:l2}
    \| \vec{X}v \|_{\fsL^2(\cyl_{s_1,r_1})}^2
    \lesssim
    \left(1 + \frac{1}{R{-}r_1} + \frac{1}{S{-}s_1}\right)^2
    \| (1+\Lambda) v \|_{\fsL^2(\cyl_{s_1,r_1})}^2
    + \| \tilde f \|_{\fsL^2(\cyl_{s_1,r_1})}^2
    + \| \tilde c v \|_{\fsL^2(\cyl_{s_1,r_1})}^2
    + \| \tilde g v \|_{\fsL^1(\cyl_{s_1,r_1})}.
  \end{equation}
  By this gained control, we can compare with the solution of the
  smooth problem and gain the claimed control in \(\cyl_{s,r}\).
  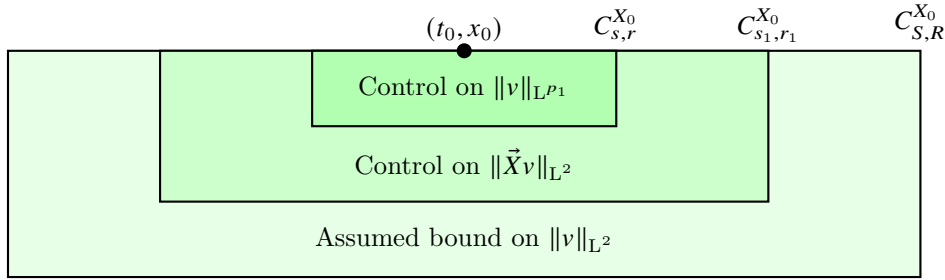
\begin{figure}[h]
    \centering
    \begin{tikzpicture}[xscale=2]
      \draw[thick,fill=green!10] (-3,0) -- (3,0) -- (3,-3) -- (-3,-3)
      -- cycle;
      \draw[thick,fill=green!20] (-2,0) -- (2,0) -- (2,-2) -- (-2,-2)
      -- cycle;
      \draw[thick,fill=green!30] (-1,0) -- (1,0) -- (1,-1) -- (-1,-1)
      -- cycle;
      \node at (0,0)[circle,fill,inner sep=2pt]{};
      \node at (0,0)[anchor=south]{\((t_0,x_0)\)};
      \node[anchor=south] at (1,0) {\(\cyl_{s,r}\)};
      \node[anchor=south] at (2,0) {\(\cyl_{s_1,r_1}\)};
      \node[anchor=south] at (3,0) {\(\cyl_{S,R}\)};
      \node at (0,-0.5) {Control on \(\| v \|_{\fsL^{p_1}}\)};
      \node at (0,-1.5) {Control on \(\| \vec{X} v \|_{\fsL^2}\)};
      \node at (0,-2.5) {Assumed bound on \(\| v \|_{\fsL^2}\)};
    \end{tikzpicture}
    \caption{Illustration of the strategy of proof for
      \cref{thm:gain-integrability}.}
    \label{fig:sketch-integrability}
  \end{figure}

  \minisec{Step 1: \(\fsL^2\) estimate}

  As discussed in \cref{sec:preliminaries} take a spatial cutoff \(\eta_1\)
  between \(r_1\) and \(R\) and a temporal cutoff \(\tau_1\) between
  \(t_0-s_1\) and \(t_0-S\).

  We now test \eqref{eq:sub-v} against \(\tau_1 \eta_1^2v\).  For the
  drift note that (using \(X_0 \eta_1 = 0\))
  \begin{equation*}
    \int_{\cyl_{S,R}}
    X_0(v)\, \tau_1 \eta_1^2 v
    = \int_{\{t_0\} \times B_R(x_0)}
    \eta_1^2 \frac{v^2}{2}
    - \int_{\cyl_{S,R}} \tau_1' \eta_1^2 \frac{v^2}{2}
    - \int_{\cyl_{S,R}} \tau_1 \eta_1^2 \frac{v^2}{2}\, \divergence X_0.
  \end{equation*}

  For the operator \(\tilde A\) note that (recalling \(\tilde b \equiv 0\))
  \begin{align*}
    p_i \tilde A^i(t,x,z,p)
    \ge \lambda |p|^2 - |p|\, |\tilde f|
    \ge \frac{3\lambda}{4} |p|^2
      - \lambda^{-1} |\tilde f|^2
  \end{align*}
  and
  \begin{equation*}
    |q_i \tilde A^i(t,x,z,p)|
    \le |q|
    \left(
      \Lambda(t,x)\, |p| + |\tilde f|
    \right).
  \end{equation*}
  Hence
  \begin{align*}
    &\int_{\cyl_{S,R}} -X_i^t \tilde A^i(t,x,v+h,\vec{X}v)\, \tau_1 \eta_1^2 v\\
    &= \int_{\cyl_{S,R}} X_i(v) \tilde A^i(t,x,v+h,\vec{X}v)\, \tau_1 \eta_1^2
      + 2 \int_{\cyl_{S,R}} X_i(\eta_1) \tilde A^i(t,x,v+h,\vec{X}v)\, \tau_1 \eta_1
      v \\
    &\ge \frac{\lambda}{2} \int_{\cyl_{S,R}} |\vec{X} v|^2 \tau_1 \eta_1^2
      - \int_{\cyl_{S,R}}
      |\tilde f|^2 \tau_1 \eta_1^2
      - \int_{\cyl_{S,R}} \left(1+4\frac{\Lambda(t,x)^2}{\lambda}\right)
      |\vec{X} \eta|^2 \tau_1 v^2.
  \end{align*}

  Finally for \(\tilde B\), we find that (recalling \(\tilde d \equiv 0\))
  \begin{equation*}
    \int_{\cyl_{S,R}} -B(t,x,v+h,\vec{X}v)\, \tau_1 \eta_1^2 v
    \ge - \int_{\cyl_{S,R}}
    \left[
      \frac{\lambda}{4}\delta |\vec{X} v|^2
      +
      |\tilde c v|^2
      + |\tilde g v|
    \right] \tau_1 \eta_1^2.
  \end{equation*}
  Hence combining the different parts yields the claimed control
  \eqref{eq:int:l2} on \(\cyl_{s_1,r_1}\).

  \minisec{Step 2: comparison with smooth problem}

  For the next step, take a spatial cutoff \(\eta_1\)
  between \(r\) and \(r_1\) and a temporal cutoff \(\tau_1\) between
  \(t_0-s\) and \(t_0-s_1\).

  The idea is to rewrite \eqref{eq:sub-v} for \(v\) as
  \begin{equation}
    \label{eq:principle-v}
    (X_0 - \oL_p)(\tau_2 \eta_2 v)
    \le \tilde G + X_i^t \tilde F^i.
  \end{equation}
  By \hypref{h:smooth-ivp}, we then find a function \(w\) solving
  \begin{equation}
    \label{eq:sup:used-dual}
    \left\{
      \begin{aligned}
        &(X_0 - \oL_0^\ext)(w)
        \ge \tilde G + X_i^t \tilde F^i + (\oL_p-\oL_0^\ext)(\tau_2 \eta_2 v)&
        &\text{in } \Omega_t^\ext \cap \{t>t_0-s_1\}, \\
        &w \ge 0
        & &\text{on } \{t_0-s_1\} \times B^\ext \cup (t_0-s_1,t_2) \times
        \partial B^\ext
      \end{aligned}
    \right.
  \end{equation}
  By the weak maximum principle for \(X_0 - \oL_0^\ext\), we find
  \begin{equation*}
    \tau_2 \eta_2 v \le w
    \qquad \text{in } \Omega_t^\ext \cap \{t>t_0-s_1\}
  \end{equation*}
  so that
  \(\| v \|_{\fsL^{p_1}(\cyl_{s,r})} \le \| w
  \|_{\fsL^{p_1}(\cyl_{s,r})}\). Then the result follows by the bound
  \eqref{eq:l0-estimate} in \hypref{h:smooth-ivp}.

  Hence we first compute (recalling \(\tilde b \equiv 0\) and
  \(\tilde d \equiv 0\))
  \begin{align*}
    (X_0 - \oL_p)(\tau_2\eta_2 v)
    &= \tau_2 \eta_2 (X_0-\oL_p)(v)
      + v\, X_0(\tau_2\eta_2)
      - \tau_2 a^{ij} X_i \eta_2 X_j v
      - X_i^t(a^{ij} v X_j \eta_2) \tau_2 \\
    &\le \tau_2 \eta_2
      \left[
      - X_i^t \tilde f^i
      + \tilde{c}^i X_i v - \tilde g
      \right]
      + v \eta_2 \tau_2'
      - \tau_2 a^{ij} X_i \eta_2 X_j v
      - X_i^t(a^{ij} v X_j \eta_2) \tau_2
  \end{align*}
  so that we verify \eqref{eq:principle-v} with
  \begin{align*}
    \tilde G &= \tau_2 X_i \eta_2 \tilde f^i
             + \tau_2 \eta_2 (\tilde c^i X_i v - \tilde g)
             + v \eta_2 \tau_2'
               - \tau_2 a^{ij} X_i \eta_2 X_j v
    &&\text{and}&
    \tilde F^i &=
                 - \tau_2 \eta_2 \tilde f^i - \tau_2 a^{ij} v
                 X_j \eta_2.
  \end{align*}
  For the additional term in \eqref{eq:sup:used-dual} note that
  \(\oL^\ext_0 = \oL_0\) in \(\Omega_t\) and
  \(\supp \tau_2\eta_2 v \subset \Omega_t\) to get
  \begin{align*}
    (\oL_p - \oL_0^\ext)(\tau_2 \eta_2 v)
    = X_i^t\left(
    (a^{ij}-\delta^{ij}) X_j(\tau_2\eta_2 v)
    \right)
  \end{align*}
  so that we find
  \begin{equation*}
    (X_0 - \oL_0^\ext)(w) \ge G + X_i^t F^i
  \end{equation*}
  where
  \begin{align*}
    G = \tilde G \text{ and }
    F^i = \tilde F^i + (a^{ij}-\delta^{ij}) X_j(\tau_2\eta_2 v)
    = - \tau_2 \eta_2 \tilde f^i
    + \tau_2 \eta_2 (a^{ij}-\delta^{ij}) X_j v
    - \tau_2 v \delta^{ij} X_j \eta_2.
  \end{align*}

  We now estimate
  \begin{equation*}
    \begin{aligned}
      \| G \|_{\fsL^{\gamma_0}(\Omega_t^\ext)}
      &\lesssim \frac{1}{r_1-r}
      \left(
        \| \tilde f \|_{\fsL^{\gamma_0}(\cyl_{s_1,r_1})}
        + \| \Lambda \vec{X} v \|_{\fsL^{\gamma_0}(\cyl_{s_1,r_1})}
      \right) \\
      &\quad + \frac{1}{s_1-s}
      \| v \|_{\fsL^{\gamma_0}(\cyl_{s_1,r_1})}
      + \| \tilde c \vec{X} v \|_{\fsL^{\gamma_0}(\cyl_{s_1,r_1})}
      + \| \tilde g \|_{\fsL^{\gamma_0}(\cyl_{s_1,r_1})}
    \end{aligned}
  \end{equation*}
  and
  \begin{equation*}
    \| F \|_{\fsL^{\gamma_1}(\Omega_t^\ext)}
    \lesssim
    \| \tilde f \|_{\fsL^{\gamma_1}(\cyl_{s_1,r_1})}
    + \| (1+\Lambda) \vec X v \|_{\fsL^{\gamma_1}(\cyl_{s_1,r_1})}
    + \frac{1}{r_1-r}
    \| v \|_{\fsL^{\gamma_1}(\cyl_{s_1,r_1})}.
  \end{equation*}

  Recalling \(\gamma_0 \le \gamma_1\le 2\) we therefore find
  \begin{equation*}
    \begin{aligned}
      \| w \|_{\fsL^{p_1}(\cyl_{s_1,r_1})}
      \lesssim
      &\left(1 + \frac{1}{r_1-r} + \frac{1}{s_1-s}\right)\\
      &\Big(
      \| \tilde f \|_{\fsL^2(\cyl_{s_1,r_1})}
      + \| \tilde g \|_{\fsL^{\gamma_0}(\cyl_{s_1,r_1})}
      + \| v \|_{\fsL^2(\cyl_{s_1,r_1})}
      + \big(1 + \| \Lambda \|_{\fsL^{q_1}(M)}
      + \| \tilde c \|_{\fsL^{q_0}(M)}\big)
      \| \vec{X} v \|_{\fsL^2(\cyl_{s_1,r_1})}
      \Big).
    \end{aligned}
  \end{equation*}
  This shows the claimed result (setting \(s_1 = (s+S)/2\) and
  \(r_1 = (r+R)/2\)) by using the expressions for \(\tilde f\) and
  \(\tilde g\) and noting that if \(K_{\epsilon,h}''(z)\) is zero
  unless \(|z-h|\le \epsilon\) so that
  \begin{equation*}
    \| K_{\epsilon,h}'' (f-bu)^2 v\|_{\fsL^1(M)}
    \lesssim \| \bar f \ind_{v\le 2\epsilon} \|_{\fsL^2(M)}^2
    \text{ and }
    \| K_{\epsilon,h}'' (f-bu)^2 \|_{\fsL^{\gamma_0}(M)}
    \lesssim \frac{1}{\epsilon}
    \| \bar f \ind_{v\le 2\epsilon} \|_{\fsL^{2\gamma_0}(M)}^2.
  \end{equation*}
  The restriction of \(\bar f\) and \(\bar g\) to \(M = \{v>0\}\)
  follows from the fact that the factor \(K_{\epsilon,h}'(u)\) and
  \(K_{\epsilon,h}''(u)\) in \(\tilde f\) and \(\tilde g\) vanish
  otherwise.
\end{proof}

By interpolation we can start from the \(\fsL^1\) norm.

\begin{lemma}\label{thm:l1-interpolation}
  Assume the setup of \cref{thm:gain-integrability} with \(h>0\) and
  take as in the statement of \cref{thm:supremum-bound} exponents
  \(p_0\) and \(q_d\) and let \(\bar q_0\) be the exponent given by
  \begin{equation*}
    \frac{1}{\bar q_0} + \frac{1}{p_0} = \frac 12.
  \end{equation*}
  Then there exists a constant \(C_3\) and exponent \(\alpha\) such
  that for \(0<s<S\) and \(0<r<R\) it holds that
  \begin{equation*}
    \| v \|_{\fsL^{p_1}(\cyl_{s,r})}
    \le C_3 P^\alpha \| v \|_{\fsL^{1}(\cyl_{S,R})}
    + C_3 Q
  \end{equation*}
  where (with \(p_0^*\) as the dual exponent of \(p_0\))
  \begin{equation*}
    \begin{split}
      P = &\left(1 + \frac{1}{S-s} + \frac{1}{R-r}\right)^2
            \left[1 + \| \Lambda \|_{\fsL^{q_1}(M)}
            + \| c \|_{\fsL^{q_0}(M)}
            \right]\\
          &\left[
            1 + \| \Lambda \|_{\fsL^{\bar q_0}(M)}
            + \| c \|_{\fsL^{\bar q_0}(M)}
            + \| b \|_{\fsL^{\bar q_0}(M)}
            + \| d \|_{\fsL^{q_d}(M)}
            + \| g \|_{\fsL^{p_0^*}(M)}
            \right]
    \end{split}
  \end{equation*}
  and
  \begin{equation*}
    \begin{split}
      Q
      &= \left(1 + \frac{1}{S-s} + \frac{1}{R-r}\right)^2
        \left[1 + \| \Lambda \|_{\fsL^{q_1}(M)}
        + \| c \|_{\fsL^{q_0}(M)}
        \right]\\
      &\left[
        \| f \|_{\fsL^2(M)}
        + \frac{1}{\epsilon} \| f \|_{\fsL^{2\gamma_0}(M)}^2
        + (\epsilon+h)
        \left(
        \| b \|_{\fsL^{2}(M)}
        + \frac{\epsilon+h}{\epsilon}
        \| b \|_{\fsL^{2\gamma_0}(M)}^2
        + \| d \|_{\fsL^{\gamma_0}(M)}
        \right)
        + \| g \|_{\fsL^{\gamma_0}(M)}
        \right].
    \end{split}
  \end{equation*}
\end{lemma}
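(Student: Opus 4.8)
The plan is to feed the output of \cref{thm:gain-integrability} into a Hölder--interpolation--absorption argument. Throughout write $M=\cyl_{S,R}(t_0,x_0)\cap\{v>0\}$ and recall $\bar f=f-bu$, $\bar g=g-du$.

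\textbf{Step 1: removing $u$.} Since $K_{\epsilon,h}(z)=\rho_\epsilon\conv(z-h)_+\ge(z-h-\epsilon)_+$, on $M$ one has $u>h-\epsilon$, and because $h>0$ this gives the pointwise estimate $|u|\le v+h+\epsilon$ on $M$, together with $|u|\le h+3\epsilon$ on the subset $\{0<v\le2\epsilon\}$. Substituting these into the right-hand side of \cref{thm:gain-integrability}, every occurrence of $\bar f$ or $\bar g$ decomposes into a \emph{source part} --- built only from $f,g$ and from $(h+\epsilon)$ times powers of $b,d$ --- and a \emph{$v$-part} of the schematic form $(\text{coefficient})\cdot v$ or $(\text{coefficient})\cdot v^2$. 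The source parts, grouped with the prefactor $\bigl(1+\tfrac1{S-s}+\tfrac1{R-r}\bigr)^2\bigl[1+\|\Lambda\|_{\fsL^{q_1}(M)}+\|c\|_{\fsL^{q_0}(M)}\bigr]$, reassemble exactly into $Q$; in particular $\tfrac1\epsilon\|\bar f\,\ind_{v\le2\epsilon}\|_{\fsL^{2\gamma_0}(M)}^2$ produces the terms $\tfrac1\epsilon\|f\|_{\fsL^{2\gamma_0}}^2$ and $\tfrac{(h+\epsilon)^2}{\epsilon}\|b\|_{\fsL^{2\gamma_0}}^2$ of $Q$.

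\textbf{Step 2: Hölder.} To each $v$-part apply Hölder, putting the coefficient in the norm prescribed in \cref{thm:supremum-bound} and $v$ in $\fsL^r$ for a suitable $r\le p_0$. The bounds on $\tfrac1{q_\Lambda},\tfrac1{q_b},\tfrac1{q_c},\tfrac1{q_d}$ there, together with $\tfrac1{\bar q_0}+\tfrac1{p_0}=\tfrac12$ and $\tfrac12+\tfrac1{q_j}=\tfrac1{\gamma_j}$, are exactly what makes such an $r\le p_0$ admissible --- for instance $\|(1{+}\Lambda)v\|_{\fsL^2}\le\|1{+}\Lambda\|_{\fsL^{\bar q_0}}\|v\|_{\fsL^{p_0}}$, $\|cv\|_{\fsL^2}\le\|c\|_{\fsL^{\bar q_0}}\|v\|_{\fsL^{p_0}}$, $\|bv\|_{\fsL^2}\le\|b\|_{\fsL^{\bar q_0}}\|v\|_{\fsL^{p_0}}$, $\|gv\|_{\fsL^1}\le\|g\|_{\fsL^{p_0^*}}\|v\|_{\fsL^{p_0}}$, and $\|dv^2\|_{\fsL^1}\le\|d\|_{\fsL^{q_d}}\|v\|_{\fsL^{2q_d'}}^2$ with $2q_d'\le p_0$ ($q_d'$ the conjugate exponent of $q_d$). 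Using $\|v\|_{\fsL^r(M)}\le|M|^{1/r-1/p_0}\|v\|_{\fsL^{p_0}(M)}$ on the bounded cylinder, the right-hand side of \cref{thm:gain-integrability} is then bounded by $\lesssim P\bigl(\|v\|_{\fsL^{p_0}(M)}+\|v\|_{\fsL^{p_0}(M)}^{1/2}\bigr)+Q$, the coefficient norms liberated by Hölder being precisely the second bracket of $P$; the half-power $\|v\|_{\fsL^{p_0}(M)}^{1/2}$ originates from $\sqrt{\|\bar g v\|_{\fsL^1}}$ and is handled by an extra split $\sqrt{XY}\le\tfrac12(\kappa^{-1}X+\kappa Y)$.

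\textbf{Step 3: interpolation and absorption.} Since $2<p_0<p_1$ and $v\ge0$, interpolate $\|v\|_{\fsL^{p_0}(M)}\le\|v\|_{\fsL^{p_1}(\cyl_{S,R})}^{\theta}\|v\|_{\fsL^1(\cyl_{S,R})}^{1-\theta}$ with $\tfrac1{p_0}=\tfrac\theta{p_1}+(1-\theta)$, using that $\|v\|_{\fsL^{p_1}(M)}=\|v\|_{\fsL^{p_1}(\cyl_{S,R})}$ as $v\ge0$. Because $\cyl_{S,R}$ is strictly larger than $\cyl_{s,r}$, the factor $\|v\|_{\fsL^{p_1}(\cyl_{S,R})}$ cannot be absorbed into the left-hand side directly; instead one reruns Steps 1--2 on the one-parameter family of nested cylinders $\cyl_{s,r}\subset\cyl_{\sigma(t),\rho(t)}\subset\cyl_{S,R}$, $t\in[0,1]$, with $\sigma,\rho$ affine in $t$ (so that the cutoff prefactor $(1+\cdots)^2$ on a pair $t_1<t_2$ is $\lesssim(t_2-t_1)^{-2}$ times the one for $(s,r,S,R)$), applies Young's inequality $ab^\theta\le\delta b+C_\delta a^{1/(1-\theta)}$ and its half-power analogue to move a small multiple of $\|v\|_{\fsL^{p_1}}$ of the outer cylinder to the left, and closes with the standard iteration lemma (if $\phi\ge0$ is bounded on $[0,1]$ with $\phi(t_1)\le\tfrac12\phi(t_2)+A(t_2-t_1)^{-\alpha_0}+B$ for all $t_1<t_2$, then $\phi(0)\lesssim A+B$) applied to $\phi(t)=\|v\|_{\fsL^{p_1}(\cyl_{\sigma(t),\rho(t)})}$. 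The Young exponents $1/(1-\theta)$, together with the extra factor coming from the quadratic and half-power terms, collapse the first bracket of $P$ onto $P^\alpha$, keep $\|v\|_{\fsL^1}$ linear, and leave the remainder $Q$, which is the claim.

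\textbf{Main obstacle.} The only conceptual point is the absorption in Step 3: \cref{thm:gain-integrability} controls $\|v\|_{\fsL^{p_1}}$ on a cylinder strictly smaller than the one appearing in its own right-hand side, so the gain cannot be closed in one shot and the iteration lemma on a continuum of cylinders is needed. Everything else is bookkeeping --- matching the Hölder exponents to the integrability hypotheses of \cref{thm:supremum-bound} and verifying that the source contributions reassemble into $Q$ and the liberated coefficient norms into $P^\alpha$; the only delicate piece is the square-root term $\sqrt{\|\bar g v\|_{\fsL^1}}$, which must be arranged so that $\|v\|_{\fsL^1}$ still enters linearly after interpolation.
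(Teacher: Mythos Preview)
Your proposal is correct and follows essentially the same route as the paper: apply \cref{thm:gain-integrability} on a continuum of intermediate cylinders, use H\"older (with the exponent $\bar q_0$ and the substitution $|u|\le v+h+\epsilon$ on $M$) to reduce the right-hand side to $P\|v\|_{\fsL^{p_0}}+Q$, interpolate $\fsL^{p_0}$ between $\fsL^1$ and $\fsL^{p_1}$ with Young, and close via the geometric-series iteration lemma (the paper cites \cite[Lemma~6.1]{giusti-2003-direct} and uses the sequence $\sigma_i=1-\beta^i$). Your Steps~1--2 spell out in detail what the paper compresses into the single line ``apply \cref{thm:gain-integrability} to find $Z(\sigma_1)\le\tilde C_2[\,\cdots]^2[P\|v\|_{\fsL^{p_0}}+Q]$'', and your identification of the absorption obstacle and its resolution by iteration matches the paper exactly.
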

\begin{proof}
  For \(\sigma \in [0,1]\) define the cylinder
  \(\cyl_\sigma = \cyl_{s_\sigma,r_\sigma}\) with
  \begin{equation*}
    s_{\sigma} = s + \sigma (S-s)
    \quad\text{and}\quad
    r_{\sigma} = r + \sigma (R-r)
  \end{equation*}
  and let
  \begin{equation*}
    Z(\sigma) = \| v \|_{\fsL^{p_1}(\cyl_\sigma)}.
  \end{equation*}

  For \(0\le\sigma_1<\sigma_2\le 1\) apply
  \cref{thm:gain-integrability} to find with a constant \(\tilde C_2\)
  \begin{equation*}
    Z(\sigma_1) \le \tilde C_2
    \left[1 + \frac{1}{\sigma_2-\sigma_1}\right]^2
    \left[P \| v \|_{\fsL^{p_0}(\cyl_{\sigma_2})} + Q\right].
  \end{equation*}

  As \(1<p_0<p_1\) there exist interpolation parameter
  \(\theta \in (0,1)\) such that for all \(\delta>0\)
  \begin{equation*}
    \| v \|_{\fsL^{p_0}} \le \| v \|_{\fsL^1}^{1-\theta} \| v \|_{\fsL^{p_1}}^{\theta}
    \le \theta \delta \| v \|_{\fsL^{p_1}}
      + (1-\theta) \delta^{\frac{-\theta}{1-\theta}}
      \| v \|_{\fsL^1}.
  \end{equation*}

  By the interpolation we find with a constant \(\tilde C_3\)
  \begin{equation*}
    Z(\sigma_1) \le \theta Z(\sigma_2) + \tilde C_3
    \left[
      1 + \frac{1}{\sigma_2-\sigma_1}
    \right]^{\frac{2}{1-\theta}}
    \left[
    P^{\frac{1}{1-\theta}}
    \| v \|_{\fsL^1(M)}
    +Q
    \right].
  \end{equation*}

  The result now follows by a standard argument for geometric series,
  see e.g.\ \cite[Lemma~6.1]{giusti-2003-direct}. Consider for some
  \(\beta<1\) the sequence
  \begin{equation*}
    \sigma_i = 1 - \beta^i.
  \end{equation*}
  Then the previous argument shows
  \begin{equation*}
    Z(\sigma_{i-1}) \le \theta Z(\sigma_i)
    + \tilde C_3
    \left[
      P^{\frac{1}{1-\theta}}
      \| v \|_{\fsL^1(M)}
      +Q
    \right]
    \left(1 + \frac{1}{(1-\beta)\beta^{i-1}}\right)^{\frac{2}{1-\theta}}
  \end{equation*}
  and iterating the argument shows that for any \(k\in\N\)
  \begin{equation*}
    Z(0) = Z(\sigma_0)
    \le \theta^{k} Z(\sigma_k)
    + \tilde C_3
    \left[
      P^{\frac{1}{1-\theta}}
      \| v \|_{\fsL^1(M)}
      +Q
    \right]
    \sum_{i=1}^{k} \theta^{i-1}
    \left(1 + \frac{1}{(1-\beta)\beta^{i-1}}\right)^{\frac{2}{1-\theta}}.
  \end{equation*}
  For \(\beta\) sufficiently close to \(1\), the series converges and
  the result follows.
\end{proof}

We can now collect the different parts.

\begin{proof}[Proof of \cref{thm:supremum-bound}]
  By considering \(u/N\) instead of \(u\) where
  \(N = \| u \|_{\fsL^{1}(\cyl_{S,R}(t_0,x_0))} + \| f
  \|_{\fsL^{q_b}(\cyl_{S,R}(t_0,x_0))} + \| g
  \|_{\fsL^{q_d}(\cyl_{S,R}(t_0,x_0))} \) it suffices to prove
  \begin{equation*}
    \sup_{\cyl_{s,r}(t_0,x_0)} u \le C_S\, (1+\delta_S)^\beta
  \end{equation*}
  under the assumption that
  \begin{equation*}
    \| u \|_{\fsL^{1}(\cyl_{S,R}(t_0,x_0))}
    + \| f \|_{\fsL^{q_b}(\cyl_{S,R}(t_0,x_0))}
    + \| g \|_{\fsL^{q_d}(\cyl_{S,R}(t_0,x_0))}
    \le 1.
  \end{equation*}\medskip

  For the proof, consider a sequence of cylinders
  \(\cyl_k = \cyl_{s_k,r_k}(t_0,x_0)\) for \(k \in \N\) where
  \begin{equation*}
    s_k = s + 2^{-k} (S-s)
    \text{ and }
    r_k = r + 2^{-k} (R-r).
  \end{equation*}
  On the cylinders consider the regularised cutoffs
  \(v_k = K_{\epsilon_k,h_k}(u)\) where
  \begin{equation*}
    h_k = D (1-2^{-k})
    \text{ and }
    \epsilon_k = \frac{D}{4} 2^{-k}
  \end{equation*}
  for a parameter \(D \ge 1\). We then study
  \begin{equation*}
    Z_k = \| v_k \|_{\fsL^{p_0}(\cyl_{k+1})}
    \text{ and }
    M_k = C_k \cap \{v_k > 0\}.
  \end{equation*}

  As a first step we will then show for an exponent \(\alpha_1>0\)
  the initial bound
  \begin{equation}
    \label{eq:sup:iteration-start}
    Z_1 \lesssim (1+\delta_S)^{\alpha_1}\,
    D^{1-\frac{p_1}{p_0}}.
  \end{equation}
  The second step is to show for a constant \(W\) and exponents
  \(\alpha_2,\delta>0\) that
  \begin{equation}
    \label{eq:sup:iteration-step}
    Z_{k} \lesssim W^{k} (1+\delta_S)^2 Z_{k-1}^{1+\delta}.
  \end{equation}
  Hence for \(D \gtrsim (1+\delta_S)^\beta\) for some exponent
  \(\beta>0\) we have that \(Z_k \to 0\) as \(k\to \infty\) which
  implies the result.

  \minisec{Initial bound \eqref{eq:sup:iteration-start}}

  This follows from applying \cref{thm:l1-interpolation} twice. As a
  first step apply it between the cylinders \(\cyl_1\) and \(\cyl_0\)
  with \(K_{0,1}(u)\) to find
  \begin{equation*}
    \| K_{0,1}(u) \|_{\fsL^{p_1}(\cyl_1)} \lesssim (1+\delta_S)^{2\alpha}.
  \end{equation*}
  By Hölder this implies for \(D\) large enough that
  \begin{equation*}
    |M_1| \lesssim D^{-\frac{1}{p_1}} (1+\delta_S)^{2\alpha}.
  \end{equation*}
  Using again Hölder, this shows by the choice of the integrabilities
  that for \(D\) large enough
  \begin{equation*}
    (\epsilon_1+h_1)
    \left(
      \| b \|_{\fsL^{2}(M_1)}
      + \frac{\epsilon+h}{\epsilon}
      \| b \|_{\fsL^{2\gamma_0}(M_1)}^2
      + \| d \|_{\fsL^{\gamma_0}(M_1)}
    \right)
    \lesssim (1+\delta_S)^2.
  \end{equation*}
  Hence we can apply \cref{thm:l1-interpolation} again to find with
  some exponent \(\alpha_1\)
  \begin{equation*}
    \| v_1 \|_{\fsL^{p_1}(\cyl_1)} \lesssim
    (1+\delta_S)^{\alpha_1}.
  \end{equation*}
  This yields \eqref{eq:sup:iteration-start} by another application of Hölder.

  \minisec{Iteration step \eqref{eq:sup:iteration-step}}

  Note that the regularisations \(\epsilon_k\) are chosen such that
  \(\epsilon_k + \epsilon_{k+1} \le (h_{k+1}-h_k)/2\) so that
  \begin{equation}
    \label{eq:sup:comparision-steps}
    \{v_{k+1}>0\} \subset \{v_{k} > (h_{k+1}-h_k)/2\}
    \text{ and }
    v_{k+1} \ge v_k.
  \end{equation}
  Hence we find by Hölder that
  \begin{equation}
    \label{eq:sup:m-z}
    |M_k|^{1/p_0}
    \le \frac{2}{h_k-h_{k-1}} Z_{k-1}.
  \end{equation}
  and
  \begin{equation}
    \label{eq:sup:z-1}
    Z_k \le |M_k|^{\frac{1}{p_0}-\frac{1}{p_1}} \| v_k \|_{\fsL^{p_1}(\cyl_{k+1})}.
  \end{equation}

  By \cref{thm:gain-integrability} we estimate
  \(\| v_k \|_{\fsL^{p_1}(\cyl_{k+1})}\) by going to \(\cyl_k\) as
  \begin{equation}
    \label{eq:sup:z-2}
    \| v_k \|_{\fsL^{p_1}(\cyl_{k+1})}
    \lesssim 8^k (1+\delta_S)^2
    \left[
      (1+D) |M_k|^{\frac{1}{p_0}} + \| v_k \|_{\fsL^{p_0}(\cyl_k)}
    \right]
  \end{equation}
  by noting the bounds
  \begin{align*}
    \| (1+\Lambda) v_k \|_{\fsL^2(M_k)}
    &\le (1 + \| \Lambda \|_{\fsL^{\bar q_0}})
      \| v_k \|_{\fsL^{p_0}(\cyl_k)}\\
    \| \bar f_k \|_{\fsL^2(M_k)}
    &\le \| f \|_{\fsL^{\bar q_0}}
      |M_k|^{\frac 12 - \frac{1}{\bar q_0}}
      + \| b \|_{\fsL^{\bar q_0}}
      \left( (1+D) |M_k|^{\frac 12 - \frac{1}{\bar q_0}}
      + \| v_k \|_{\fsL^{p_0}(M_k)} \right) \\
    \| c v_k \|_{\fsL^2(M_k)}
    &\le \| c \|_{\fsL^{\bar q_0}}
      |M_k|^{\frac 12 - \frac{1}{\bar q_0}}\\
    \| \bar g_k v_k \|_{\fsL^1(M_k)}^{1/2}
    &\lesssim \| \bar g_k \|_{\fsL^{p_0^*}(M_k)}
      + \| v_k \|_{\fsL^{p_0}(M_k)}\\
    &\lesssim \left( \|g \|_{\fsL^{q_d}} + (1+D) \| d \|_{\fsL^{q_d}}
      \right) |M_k|^{\frac{1}{p_0^*}-\frac{1}{q_d}}
      + (1 + \| d \|_{\fsL^{q_d}})
      \| v_k \|_{\fsL^{p_0}(M_k)}\\
    \| \bar g_k \|_{\fsL^{\gamma_0}(M_k)}
    &\le \| g \|_{\fsL^{q_d}}
      |M_k|^{\frac{1}{\gamma_0} - \frac{1}{q_d}}
      + \| d \|_{\fsL^{q_d}}
      \left( (1+D) |M_k|^{\frac{1}{\gamma_0} - \frac{1}{q_d}}
      + \| v_k \|_{\fsL^{p_0}(M_k)} \right) \\
    \frac{1}{\epsilon_k} \| \bar f_k \ind_{v_k\le 2\epsilon_k}
    \|_{\fsL^{2\gamma_0}(M_k)}^2
    &\lesssim 2^k (1+D)
      \left(
      \| f \|_{\fsL^{q_b}}^2 + \| b \|_{\fsL^{q_b}}^2
      \right)
      |M_k|^{2\left(\frac{1}{2\gamma_0}-\frac{1}{q_b}\right)}.
  \end{align*}

  Chaining \eqref{eq:sup:m-z}, \eqref{eq:sup:z-1} and
  \eqref{eq:sup:z-2} then yields the required bound
  \eqref{eq:sup:iteration-step}.
\end{proof}

\section{Weak Harnack inequality}
\label{sec:harnack}

In this section we prove \cref{thm:weak-harnack}. We first introduce a
regularised version of \((-\log(z))_+\) as \(G \in
\fsC^2((0,\infty))\) by
\begin{equation*}
  G(z) = (-\log z +z - 1) \ind_{z\le 1}
\end{equation*}
so that
\begin{equation*}
  G'(z) = \left(-\frac 1z + 1\right) \ind_{z\le 1} \le 0
  \text{ and }
  G''(z) = \frac{1}{z^2} \ind_{z\le 1} \ge 0.
\end{equation*}
This implies
\begin{equation}
  \label{eq:g-magic}
  G''(z) \ge \left[G'(z)\right]^2.
\end{equation}

As already used in \textcite{nash-1958-continuity}, then consider
\begin{equation*}
  v = G_\delta(u)
  \quad\text{where}\quad
  G_{\delta}(z) = G\left(\frac{z+\delta}{1+\delta}\right)
\end{equation*}
for a small enough \(\delta > 0\). As \(u \ge 0\) we have the trivial
bound
\begin{equation}
  \label{eq:weak-harnack:trivial}
  |v| \le G_{\delta}(0).
\end{equation}

The strategy is to use \eqref{eq:g-magic} and
\eqref{eq:weak-harnack:trivial} in order to gain a control of the
rough form \(\| \vec{X} v\|_{\fsL^2}^2 \lesssim G_{\delta}(0)\). In
the parabolic case, we can reinterpret the argument by
\textcite{nash-1958-continuity} as using the information that
\(u \ge 1\) in \(E\) to conclude by a variation of Poincaré and the
supremum bound that
\(\| v \|_{\fsL^\infty} \lesssim \| \vec{X} v\|_{\fsL^2} \lesssim
\sqrt{G_{\delta}(0)}\). As we have gained a square root, we can then
make \(\delta\) sufficiently small to conclude a non-trivial bound on
\(\| v \|_{\fsL^\infty}\) which yields the statement. In the classical
non-degenerate setting, this ideas has been used in
\cite{moser-1961-harnacks,moser-1964-harnack,kruzhkov-1963,kruzhkov-1968-apriori}
and in the kinetic and Kolmogorv setting
\cite{wang-zhang-2009-ultraparabolic,wang-zhang-2011-ultraparabolic,guerand-imbert-2021-preprint-log-transform,anceschi-rebucci-2021-preprint-weak-regularity-kolmogorov}. Here
we differ by using the dual problem to conclude the result (instead of
a Poincaré inequality inspired by the framework of
\cite{hoermander-1967-hypoelliptic,albritton-armstrong-mourrat-novack-2019-preprint-variational-fokker-planck}).

As \(u\) is a supersolution, we apply
\cref{thm:composition-subsolution} to conclude together with
\eqref{eq:g-magic} that
\begin{equation}
  \label{eq:weak-harnack:evo}
  \tilde P v + \frac{\lambda}{2} |\vec X v|^2 \le 0
\end{equation}
where \(\tilde P\) is the operator in \eqref{eq:def-p} with the new
coefficients \(\tilde a, \tilde b, \tilde c, \tilde d, \tilde f, \tilde g\).

\begin{figure}[h]
  \centering
  \begin{tikzpicture}[xscale=1.5,yscale=4]
    \draw[thick,fill=green!10] (-5,0) -- (5,0) -- (5,-1) -- (-5,-1)
    -- cycle;
    \node[anchor=south] at (5,0) {\(\Sigma_R\)};
    \draw[thick,fill=green!20] (-4,0) -- (4,0) -- (4,-1) -- (-4,-1)
    -- cycle;
    \node[anchor=south] at (4,0) {\(\tilde \Sigma_R\)};
    \draw[thick,fill=green!30] (-2,0) -- (2,0) -- (2,-0.5) -- (-2,-0.5)
    -- cycle;
    \node[anchor=south] at (2,0) {\(\cyl_{\frac 12,2}\)};
    \draw[thick,fill=green!30] (-1,0) -- (1,0) -- (1,-0.33) -- (-1,-0.33)
    -- cycle;
    \node[anchor=south] at (1,0) {\(\cyl_{\frac 13,1}\)};
    \node at (0,0)[circle,fill,inner sep=2pt]{};
    \node at (0,0)[anchor=south]{\((t_0,x_0)\)};
    \node at (4,-0.5)[anchor=west,align=left] {\(\|\vec X v\|^2\)\\\(\lesssim G_\delta(0)\)};
    \node at (0,-0.75) {Comparison with smooth dual problem};
    \node at (0,-0.5)[anchor=south] {\(\| v \|_{\fsL^1}\) control};
    \node at (0,-0.33)[anchor=south] {\(\| v \|_{\fsL^\infty}\) control};
  \end{tikzpicture}
  \caption{Illustration of the overview of proof for \cref{thm:weak-harnack}.}
  \label{fig:sketch-weak-harnack}
\end{figure}
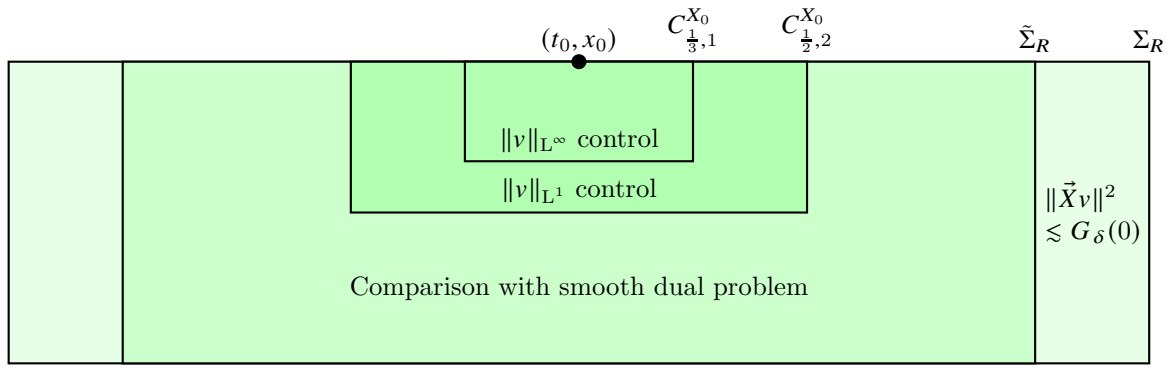

For a parameter \(R\) which is chosen large enough later, we take by
\hypref{h:small-cutoff} the sets \(\tilde \Sigma_R\) and
\(\Sigma_R\). Then we obtain the result in the following steps, cf.\
\cref{fig:sketch-weak-harnack}. By integrating the trivial \(\fsL^1\)
bound we find that
\begin{equation}
  \label{eq:weak-harnack:l1-gain}
  \| \vec X v \|_{\fsL^2(\tilde \Sigma_R)}^2 \le \tilde C_1
  \left[
    C(R) G_\delta(0) + \| \Lambda \|_{\fsL^2(\Sigma_R)}^2
    + \| \tilde f \|_{\fsL^1(\Sigma_R)}
    + \| \tilde c \|_{\fsL^2(\Sigma_R)}^2
    + \| \tilde g \|_{\fsL^1(\Sigma_R)}
  \right]
\end{equation}
for a constant \(\tilde C_1\) and a constant \(C(R)\) depending on
\(R\). Then integrating \(v\) with the solution of the dual problem
\(w\) solving \eqref{eq:weak-harnack-dual} from \hypref{h:smooth-ivp},
we find for \(\bar q_2\) from the statement of \cref{thm:weak-harnack}
and a constant \(\tilde C_2\) and a new constant \(C(R)\) depending on
\(R\) that
\begin{equation}
  \label{eq:weak-harnack:dual-gain}
  \| v \|_{\fsL^1(\cyl_{1/2,2})}^2 \le \tilde C_2
  \left(\frac{G_{\delta}(0)}{R}\right)^2
  + C(R)
  \left[
    (1 + \| \Lambda \|_{\fsL^{\bar q_2}(\tilde \Sigma_R)}
    + \| \tilde c \|_{\fsL^{\bar q_2}(\tilde \Sigma_R)} )^2
    \| \vec X v \|_{\fsL^2(\tilde \Sigma_R)}^2
    + \| \tilde f \|_{\fsL^{2}(\tilde \Sigma_R)}^2
    + \| \tilde g \|_{\fsL^{2}(\tilde \Sigma_R)}^2
  \right]
\end{equation}
and by the supremum bound we finally conclude that by choosing
\(R = C_R (1+\delta_S)^\beta\) for \(C_R\) sufficiently large that
\begin{equation}
  \label{eq:weak-harnack:sup-control}
  \begin{split}
    &\| v \|_{\fsL^\infty(\cyl_{1/3,1})}^2 -
    \left(\frac{G_\delta(0)}{2}\right)^2\\
    &\lesssim (1+\delta_S)^{2\beta} (1+\Delta)^2
      \left[
      G_\delta(0)
      + 1 + \Delta^2 +
      \frac{\|f-ub\|_{\fsL^{q_b}(\Sigma_R)}^2}{\delta^2}
      +
      \frac{\|g-ud\|_{\fsL^{q_d}(\Sigma_R)}^2}{\delta^2}
      +
      \frac{\|f-ub\|_{\fsL^{2q_d}(\Sigma_R)}^4}{\delta^4}
      \right].
  \end{split}
\end{equation}
By taking \(\delta\) small enough, we will then conclude the result.

\minisec{Integrating $\fsL^1$ bound}

Take the cutoff \(\eta_R\) from \hypref{h:small-cutoff} and consider
the localised \(\fsL^1\) norm
\begin{equation*}
  E(t) = \int_{\Sigma_t \cap (\{t\} \times \R^n)} v(t,x)\, \eta_1^2(t,x)\, \dd x.
\end{equation*}
By \eqref{eq:weak-harnack:trivial} we find as \(\Sigma_R\) is bounded
the trivial bound
\begin{equation*}
  E(t) \lesssim C(R) G_\delta(0).
\end{equation*}
By \eqref{eq:weak-harnack:evo} we find that
\begin{align*}
  \frac{\dd}{\dd t} E
  \le - \frac{\lambda}{4} \int |\vec X v|^2 \eta_R^2
  + \| \divergence X_0 \|_{\infty} E
  + \frac{2}{\lambda} \int \Lambda^2 |\vec X \eta_R|^2
  + \int |\tilde f| \, |\vec{X} \eta_R^2|
  + \frac{2}{\lambda} \int |\tilde c|^2\, \eta_R^2
  + \int |\tilde g|.
\end{align*}
Integrating over time hence yields the claimed
control~\eqref{eq:weak-harnack:l1-gain}.

\minisec{Comparison with dual problem}

Let \(w\) be the solution of the smooth dual
problem~\eqref{eq:weak-harnack-dual} where \(E\) is given in
\eqref{eq:weak-harnack-e}.

Then consider with the cutoff \(\tilde \eta_R\) from \hypref{h:small-cutoff}
\begin{equation*}
  K(t) = \int_{\Sigma_t \cap (\{t\} \times \R^n)} v\,w\, \tilde \eta_R\, \dd x.
\end{equation*}
We then find
\begin{align*}
  \frac{\dd}{\dd t} K
  &\le
    - \int X_jv a^{ij} X_i(w \tilde \eta_R)
    - \int X_jv \delta^{ij} \left[ \tilde \eta_R X_i w - w X_i \tilde \eta R\right]
    + \int v w \delta^{ij} X_i X_j \tilde \eta_R\\
  &\quad+ \int \tilde f^i X_i(w \tilde \eta_R)
    + \int \tilde c^i X_i v\, w \tilde \eta_R
    + \int \tilde g w \tilde \eta_R
    + \int v \ind_{E}.
\end{align*}
By construction \(v(x)=0\) if \(x \in E\) so that the last term
vanishes and \(w\equiv 0\) for \(t=-1\). Hence integrating
\(t\in [0,1]\) yields (using \hypref{h:small-cutoff} for bounding
\(\delta^{ij} X_i X_j \tilde \eta_R\))
\begin{align*}
  K(t)
  &\lesssim \|\vec X v\|_{\fsL^2(\tilde \Sigma_R)}
    \left(
    \| (1+\Lambda) \vec{X} w \|_{\fsL^2(\tilde \Sigma_R)}
    + \| (1+\Lambda) w \|_{\fsL^2(\tilde \Sigma_R)}
    + \| \tilde c w \|_{\fsL^2(\tilde \Sigma_R)}
    \right)
    + G_\delta(0)\, \| w \|_{\fsL^1([-1,0]\times \R^n)} R^{-1}\\
  &+ \| \tilde f \|_{\fsL^{2}(\Sigma_R)}
    \left( \| w \|_{\fsL^{p_2}(\Sigma_R)}
    + \| Xw \|_{\fsL^{p_2}(\Sigma_R)} \right)
    + \| \tilde g \|_{\fsL^{2}(\Sigma_R)}
    \| w \|_{\fsL^{p_2}(\Sigma_R)}
\end{align*}
By \hypref{h:dual-spreading} we find that
\(\| v \|_{\fsL^1(\cyl_{1/2,2})} \le \mu_0^{-1} \sup_{t\in[-1/2,0]}
K(t)\) so that the claimed estimate~\eqref{eq:weak-harnack:dual-gain}
follows by the bounds of \hypref{h:dual-spreading}.

\minisec{Using the supremum bound and conclusion}

By using the supremum bound (\cref{thm:supremum-bound}) between
\(\cyl_{\frac 13,1}\) and \(\cyl_{\frac 12,2}\), we find with a
constant \(\tilde C_3\) and a new constant \(C(R)\) depending on \(R\) that
\begin{equation*}
  \| v \|_{\fsL^\infty(\cyl_{\frac 13,1})}^2
  - \tilde C_3 (1+\delta_S)^{2\beta} \left(\frac{G_\delta(0)}{R}\right)^2
  \le C(R)  (1+\delta_S)^{2\beta}
  \left[ (1+\Delta)^2 \| \vec X v \|_2^2
    + \| \tilde f \|_{\fsL^{q_b}(\Sigma_R)}^2
    + \| \tilde g \|_{\fsL^{q_d}(\Sigma_R)}^2
  \right].
\end{equation*}
By choosing \(C_R\) sufficiently large and setting \(R = C_R
(1+\delta_S)^{\beta}\), we find
\begin{equation*}
  \| v \|_{\fsL^\infty(\cyl_{\frac 13,1})}^2
  - \left(\frac{G_\delta(0)}{2}\right)^2
  \lesssim  (1+\delta_S)^{2\beta}
    \left[ (1+\Delta)^2 \| \vec X v \|_2^2
    + \| \tilde f \|_{\fsL^{q_b}(\Sigma_R)}^2
    + \| \tilde g \|_{\fsL^{q_d}(\Sigma_R)}^2
    \right].
\end{equation*}
Plugging in~\eqref{eq:weak-harnack:l1-gain} then gives the claimed
estimate~\eqref{eq:weak-harnack:sup-control}.

As \(G_\delta(0) \to \infty\) as \(\delta \to 0\), we can then find a
sufficiently small \(\delta\) such
that~\eqref{eq:weak-harnack:sup-control} becomes
\begin{equation*}
  \| v \|_{\fsL^\infty(\cyl_{\frac 13,1})} - \frac{3}{4} G_\delta(0)
  \le \epsilon.
\end{equation*}
By letting \(\epsilon\) small enough this shows that
\(v \le (5/4) G_{\delta}(0)\) in \(\cyl_{\frac 13,1}\). As the relation
\(G_\delta(u) = v \le (5/4) G_{\delta}(0)\) implies \(u \ge \mu\)
for a constant \(\mu\), this shows the result.

\section*{Acknowledgements}

The first author would like to thank the mathematical department of
the University of Leipzig for the possibility of a long visit during
which this work has started and the continued hospitality. He would
like to thank the Isaac Newton Institute for Mathematical Sciences,
Cambridge, for support and hospitality during the programme
``Frontiers in Kinetic Theory''.  This was supported by EPSRC grant no
EP/R014604/1 and a grant from the Simons Foundations.  He also
acknowledges the grant ANR-18-CE40-0027 of the French National
Research Agency (ANR).
The second author was partially supported by the German Science Foundation DFG in context of the Priority Program SPP 2026 ``Geometry at Infinity''.

\appendix

\section{Notes on function spaces}
\label{sec:function-spaces}

We first note that \(u \in \fsHyp^1\) and \(X_0 u \in \fsHyp^{-1}\)
implies with \hypref{h:smooth-ivp} more regularity.

\begin{lemma}
  For a domain \(\Omega_t\) suppose \hypref{h:smooth-ivp} and suppose
  $u\in \fsHyp^1(\Omega_t)$ with
  $X_0 u \in \fsHyp^{-1}(\Omega_t)$. Then
  $u \in L^{p_1}_{loc}(\Omega_t)$.
\end{lemma}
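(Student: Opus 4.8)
The plan is a localisation argument reducing the statement to the comparison with the smooth problem already carried out in Step~2 of the proof of \cref{thm:gain-integrability}. It suffices to bound $\|u\|_{\fsL^{p_1}(\cyl_{s,r}(t_0,x_0))}$ whenever $\cyl_{s,r}(t_0,x_0)\subset\cyl_{S,R}(t_0,x_0)$ with $\overline{\cyl_{S,R}(t_0,x_0)}\subset\Omega_t$, since finitely many such $\cyl_{s,r}(t_0,x_0)$ cover any compact subset of $\Omega_t$. Fixing such a pair and a time $t_\init\in(t_1,t_0-S)$, I would take, as in \cref{sec:preliminaries}, a temporal cutoff $\tau$ between $t_\init$ and $t_0-S$ and a spatial cutoff $\eta$ between $r$ and $R$ with $X_0\eta=0$, and set $\chi=\tau\,\eta$. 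Then $\chi\equiv1$ on $\cyl_{s,r}(t_0,x_0)$, $\supp\chi\subset\cyl_{S,R}(t_0,x_0)\cap\{t\ge t_\init\}$, $X_0\chi=\tau'\eta$, and $\vec X\chi$, $\vec X\vec X\chi$ are bounded.

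Next I would derive the equation satisfied by $\chi u$. As $X_0u\in\fsHyp^{-1}(\Omega_t)$, the identification of $\fsHyp^{-1}$ with $\{G+X_i^tF^i:G\in\fsL^2,\ F\in\fsL^2(\cdot,\R^m)\}$ — obtained by extending the functional $v\mapsto\langle X_0u,v\rangle$ along the isometric embedding $v\mapsto(v,\vec Xv)$ of $\fsHyp^1$ into $\fsL^2\times(\fsL^2)^m$ and invoking Riesz — provides $G_1\in\fsL^2$ and $F_1\in\fsL^2(\Omega_t,\R^m)$ with $X_0u=G_1+X_i^tF_1^i$. Expanding $(X_0-\oL_0)(\chi u)$ with the product rule for the first-order operator $X_0$, the chain rule $X_i^t(\alpha\beta)=X_i(\alpha)\beta+\alpha X_i^t(\beta)$ from \cref{sec:preliminaries}, and $X_0\eta=0$, every term is either a product of $u$ or $\vec Xu$ with a bounded function (a derivative of $\chi$), hence in $\fsL^2$, or of the form $X_i^t$ of an $\fsL^2$ function, arising from the rewritings $\chi X_i^tF_1^i=X_i^t(\chi F_1^i)-X_i(\chi)F_1^i$ and $\chi\oL_0u=X_i^t(\chi X_iu)-X_i(\chi)X_iu$. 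Collecting these gives
\begin{equation*}
  (X_0-\oL_0)(\chi u)=G+X_i^tF^i,\qquad G\in\fsL^2(\Omega_t),\quad F\in\fsL^2(\Omega_t,\R^m),
\end{equation*}
with $\supp G\cup\supp F\subset\cyl_{S,R}(t_0,x_0)\cap\{t\ge t_\init\}$; in particular $\chi u\in\fsHyp^1$ and $X_0(\chi u)\in\fsHyp^{-1}$, so $\chi u$ is an admissible weak solution.

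To conclude, I would extend $\chi u$ by zero to $\Omega_t^\ext$. Since $\oL_0^\ext=\oL_0$ on $\Omega_t\supset\supp(\chi u)$, the correction $(\oL_0-\oL_0^\ext)(\chi u)$ vanishes, so $(X_0-\oL_0^\ext)(\chi u)\le G+X_i^tF^i$ in $\Omega_t^\ext\cap\{t>t_\init\}$ with $\chi u=0$ on $\{t_\init\}\times B^\ext\cup(t_\init,t_2)\times\partial B^\ext$. Applying \hypref{h:smooth-ivp} to this $G$ and $F$ yields $w\ge0$ with $(X_0-\oL_0^\ext)w\ge G+X_i^tF^i$, with $w\ge0$ on $\{t_\init\}\times B^\ext\cup(t_\init,t_2)\times\partial B^\ext$, and with $\|w\|_{\fsL^{p_1}}$ over $\cyl_{S,R}(t_0,x_0)$ bounded by $C_0(\|G\|_{\fsL^{\gamma_0}}+\|F\|_{\fsL^{\gamma_1}})$, which is finite because $\gamma_0\le\gamma_1\le2$, the domain is bounded, and $G,F\in\fsL^2$. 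The weak maximum principle for $X_0-\oL_0^\ext$ — precisely the step used in Step~2 of the proof of \cref{thm:gain-integrability} — then gives $\chi u\le w$, hence $\|\chi u\|_{\fsL^{p_1}}\le\|w\|_{\fsL^{p_1}}<\infty$; running the same argument for $-\chi u$, for which $-(X_0-\oL_0)(\chi u)=-G+X_i^t(-F^i)$ is again of the required form, bounds $-\chi u$ from above. Thus $\chi u\in\fsL^{p_1}$, and as $\chi\equiv1$ on $\cyl_{s,r}(t_0,x_0)$ so is $u$ there, which gives $u\in\fsL^{p_1}_{\mathrm{loc}}(\Omega_t)$. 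I expect the main work to be bookkeeping rather than substance: the $\fsL^2$-decomposition of $X_0u\in\fsHyp^{-1}$ and the choice of $t_\init$ and of the cutoffs so that the supports and the homogeneous boundary condition exactly match \hypref{h:smooth-ivp}; the comparison itself is identical to the one already performed in \cref{sec:l-inf}.
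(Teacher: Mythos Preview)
Your proposal is correct and follows essentially the same route as the paper: localise with a smooth cutoff, use the embedding $\fsHyp^1\hookrightarrow(\fsL^2)^{m+1}$ and Riesz to write $X_0u=G_1+X_i^tF_1^i$ with $\fsL^2$ data, then invoke \hypref{h:smooth-ivp}. The paper's proof is terse---it stops at ``Hence \hypref{h:smooth-ivp} yields the result''---whereas you spell out the comparison with $w$ via the weak maximum principle (exactly \cref{lem.weakmaxprinciple}) and the need to run the argument for both $\chi u$ and $-\chi u$; these details are implicit in the paper but your explicit treatment is faithful to how the argument actually closes.
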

\begin{proof}
  Take a compactly supported subset \(\Omega_t'\) of \(\Omega_t\) and
  let \(\varphi\) be a smooth cutoff. Then \(\varphi u \in \fsHyp^1\)
  and \(X_0(\varphi u) \in \fsHyp^{-1}\) as we can note
  \begin{equation*}
    \| X_0(\varphi u)\|_{\fsHyp^{-1}}
    \le (\|\varphi\|_\infty + \|X_0\varphi\|_\infty +
    \|\vec{X}\varphi\|_\infty)
    (\|u\|_{\fsL^2} + \| X_0u\|_{\fsHyp^{-1}}).
  \end{equation*}

  Due to embedding
  $u \in \fsHyp^1 \mapsto (u, \vec{X}u) \in (\fsL^2)^{m+1}$ every
  element $\lambda \in \fsHyp^{-1}$ can be represented as
  $\lambda = - X_i^t f^i +f^0$. Hence \hypref{h:smooth-ivp} yields
  the result.
\end{proof}

After the above described control of \(u \in \fsL^{p_1}_{loc}\) all
the a priori estimates can be defined by standard methods.

Furthermore we shortly want to recall a simple argument for a weak
maximum principle in the setting of hypoelliptic operators:
\begin{lemma}\label{lem.weakmaxprinciple}
  Let $w\in \fsHyp^1(\Omega_t^\ext \cap \{t>t_\init\})$ be a weak
  subsolution of
  \begin{equation}\label{eq.weaksubsolution}
      \left\{
    \begin{aligned}
      &(X_0 - \oL_0^\ext) w \le 0
      & &\text{in } \Omega_t^\ext \cap \{t>t_\init\}, \\
      &w = 0
      & &\text{on } \{t_\init\} \times B^\ext \cup (t_\init,t_2) \times
      \partial B^\ext
    \end{aligned}
  \right.
  \end{equation}
  then $w\le 0$ a.e. in $\fsL^2(\Omega_t^\ext \cap \{t>t_\init\})$.
\end{lemma}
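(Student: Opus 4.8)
The plan is to prove this weak maximum principle by a standard energy argument, testing the subsolution inequality against its own positive part. Concretely, I would test \eqref{eq.weaksubsolution} against the admissible test function $\varphi = w_+ = \max(w,0)$, which belongs to $\fsHyp^1$ and vanishes on the parabolic boundary $\{t_\init\} \times B^\ext \cup (t_\init,t_2) \times \partial B^\ext$ whenever $w$ does (since $w=0$ there forces $w_+ = 0$ there as well). Pairing against $w_+$ and using $\vec{X} w_+ = \ind_{\{w>0\}} \vec{X} w$ and $w\, w_+ = w_+^2$, the diffusion term $\int \vec{X} w \cdot a^\ext \vec{X} w_+$ controls $\int_{\{w>0\}} |\vec X w|^2$ from below by ellipticity (or, for the degenerate extension, is at least nonnegative), while the drift term $X_0 = \partial_t + \tilde X_0$ produces $\frac{\dd}{\dd t}\int \frac{w_+^2}{2}$ up to a lower-order term $\int \frac{w_+^2}{2}\,\divergence X_0$ coming from integration by parts in the transport part, exactly as in the energy computations already carried out in \cref{sec:preliminaries} and \cref{sec:l-inf}.

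The key steps, in order, are: (i) justify that $w_+$ is an admissible test function in the weak formulation, in particular that it lies in $\fsHyp^1(\Omega_t^\ext \cap \{t>t_\init\})$ with the correct boundary trace; (ii) carry out the integration by parts, noting that the boundary terms at $\{t_\init\}$ and on $(t_\init,t_2)\times\partial B^\ext$ vanish because $w_+ = 0$ there, leaving only the initial-time term which is zero; (iii) collect the resulting differential inequality
\begin{equation*}
  \frac{\dd}{\dd t} \int_{\{t\}\times B^\ext} \frac{w_+^2}{2}
  \le \| \divergence X_0 \|_\infty \int_{\{t\}\times B^\ext} \frac{w_+^2}{2}
\end{equation*}
holding in the sense of distributions on $(t_\init,t_2)$; (iv) apply Grönwall's inequality together with $\int \frac{w_+^2}{2} = 0$ at $t = t_\init$ to conclude $\int_{\{t\}\times B^\ext} w_+^2 = 0$ for a.e.\ $t$, hence $w \le 0$ a.e.

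The main obstacle I expect is step (i) together with the rigorous handling of the time regularity: a priori $w \in \fsHyp^1$ only gives $\fsL^2$ control in $(t,x)$ jointly, so the pointwise-in-time energy $t \mapsto \int w_+^2(t,\cdot)$ and the terminal/initial traces require the extra regularity that $X_0 w \in \fsHyp^{-1}$ provides — this is precisely the Lions-type time-continuity that makes $t \mapsto \| w(t,\cdot)\|_{\fsL^2}$ continuous and the integration-by-parts formula for $\int X_0(w)\, w_+$ legitimate. One cleanly sidesteps this by working with the Steklov average or a time-mollification $w^\varepsilon$ of $w$, deriving the energy inequality for $w^\varepsilon$ where everything is smooth in $t$, and then passing to the limit $\varepsilon \to 0$ using the $\fsHyp^1$ and $\fsHyp^{-1}$ bounds; the degenerate ellipticity of $\oL_0^\ext$ causes no trouble since we only ever use that its quadratic form is nonnegative. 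The lower-order-term-free structure ($\oL_0^\ext$ has vanishing zeroth-order terms by hypothesis) ensures no sign-indefinite contribution enters, so Grönwall closes the estimate immediately.
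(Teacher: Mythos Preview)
Your proposal is correct and follows essentially the same energy--Gr\"onwall strategy as the paper's proof. The only difference is in the regularization step: the paper smooths the truncation by replacing $w_+$ with $w_\epsilon = K_{\epsilon,2\epsilon}(w) = \rho_\epsilon \conv (w-2\epsilon)_+$ (a smooth non-decreasing convex function of $w$, hence still a subsolution by the composition argument), tests this against itself, and then sends $\epsilon \to 0$, whereas you propose to work with $w_+$ directly and handle the time-regularity via Steklov averaging.
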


\begin{proof}
  Let $\epsilon >0$ and consider the non-decreasing convex function
  $K(z)=K_{\epsilon,
    2\epsilon}(z)=\rho_\epsilon*(z-2\epsilon)_+$. Since by assumptions
  $X_i w \in \fsL^2(\Omega_t^\ext \cap \{t>t_\init\})$ for
  $ i=1,\dotsc, m$, it is not hard to check that $w_\epsilon = K(w)$
  is still a weak subsolution of \eqref{eq.weaksubsolution} with
  $X_iw_\epsilon \in \fsL^2(\Omega_t^\ext \cap \{t>t_\init\})$ for
  $i=1,\dotsc, m$.

  Testing the equation with $w_\epsilon$ and using a classical
  Gronwall argument one obtains
  \[ \sup_{t>t_\init} \| w_\epsilon \|_{\fsL^2(\Omega_t^\ext \cap
      \{t=t_\init\})} \le 0\,.\] But this clearly implies
  $w_\epsilon = 0$ a.e. Since $\epsilon>0$ was chosen arbitrary the
  conclusion follows.
\end{proof}

\section{Construction of comparision function}
\label{sec:construction-comparison}

In this section, we will discuss how the Hörmander estimates can be
used to verify \hypref{h:smooth-ivp}. Here we take bounded balls \(B\)
and \(B^\ext\) and the corresponding parabolic domains \(\Omega_t\)
and \(\Omega_t^\ext\). Between the balls \(B\) and \(B^\ext\), find a
smooth cutoff \(\chi\) and consider
\begin{equation*}
  \oL_0^\ext = \oL_0 + \nabla \cdot ((1-\chi)^2 \nabla\cdot).
\end{equation*}

For any \(\varepsilon > 0\), we can then find by standard parabolic
theory or the method of continuity a solution \(w^\varepsilon\) of
\begin{equation*}
  \left\{
    \begin{aligned}
      &(X_0 - \oL_0^\ext - \epsilon \Delta) w^\varepsilon = G + \sum_{i=1}^{m} X_i^t F^i
      & &\text{in } \Omega_t^\ext \cap \{t>t_\init\}, \\
      &w^\varepsilon = 0
      & &\text{on } \{t_\init\} \times B^\ext \cup (t_\init,t_2) \times
      \partial B^\ext
    \end{aligned}
  \right.
\end{equation*}

with the uniform \(\fsL^2\) estimate
\begin{equation*}
  \begin{split}
    &\| w^\varepsilon \|_{\fsL^2(\Omega_t^\ext \cap \{t>t_\init\})}
    + \| \vec X w^\varepsilon \|_{\fsL^2(\Omega_t^\ext \cap \{t>t_\init\})}
    + \| (1-\chi) \nabla w^\varepsilon \|_{\fsL^2(\Omega_t^\ext \cap \{t>t_\init\})} \\
    &\quad \lesssim
    \| F \|_{\fsL^2(\Omega_t^\ext \cap \{t>t_\init\})}
    +
    \| G \|_{\fsL^2(\Omega_t^\ext \cap \{t>t_\init\})}.
  \end{split}
\end{equation*}
By compactness, we can therefore find a weak limit
\(w \in \fsL^2(\Omega_t^\ext \cap \{t>t_\init\})\) with the same
bound solving
\begin{equation*}
  \left\{
    \begin{aligned}
      &(X_0 - \oL_0^\ext) w = G + \sum_{i=1}^{m} X_i^t F^i
      & &\text{in } \Omega_t^\ext \cap \{t>t_\init\}, \\
      &w = 0
      & &\text{on } \{t_\init\} \times B^\ext \cup (t_\init,t_2) \times
      \partial B^\ext
    \end{aligned}
  \right.
\end{equation*}
Here the bound on
\(\| (1-\chi) \nabla w^\varepsilon \|_{\fsL^2(\Omega_t^\ext \cap
  \{t>t_\init\})}\) imply by the trace theorem that \(w=0\) on
\((t_\init,t_2) \times \partial B^\ext\) has a well-defined meaning
and still holds for the limit \(w^\epsilon\).

Going back to the equation, this shows on \(\Omega_t\) that
\begin{equation*}
  \| w \|_{\fsHyp^1(\Omega_t \cap \{t>t_\init\}}
  + \| X_0 w \|_{\fsHyp^{-1}(\Omega_t \cap \{t>t_\init\}}
  \lesssim
  \| F \|_{\fsL^2(\Omega_t^\ext \cap \{t>t_\init\})}
  +
  \| G \|_{\fsL^2(\Omega_t^\ext \cap \{t>t_\init\})}.
\end{equation*}

Under the commutator condition,
\textcite{hoermander-1967-hypoelliptic} shows for some \(s>0\) that
\(\| w \|_{\fsH^s(\Omega_t \cap \{t>t_\init\}} \lesssim \| w
\|_{\fsHyp^1(\Omega_t \cap \{t>t_\init\}} + \| X_0 w
\|_{\fsHyp^{-1}(\Omega_t \cap \{t>t_\init\}}\), which implies the
thought bound \eqref{eq:l0-estimate} by Sobolev embedding for some
\(p_1>2\) and \(\gamma_0=\gamma_1=2\).

\begin{remark}
  The discussion of local hypoelliptic operator to the whole space
  with uniform bounds is discussed in
  \textcite[Part~1]{bramanti-brandolini-lanconelli-uguzzoni-2010-non-hoermander}.
\end{remark}

\AtNextBibliography{\small}
\printbibliography
\end{document}